\topskip \setlength{\parindent}{0pt} \setlength{\parskip}{5pt plus
\numberwithin{equation}{section}
\newtheorem{theorem}{Theorem}[section]
\newtheorem{remark}[theorem]{Remark}
\newtheorem{example}[theorem]{Example}
\newcommand{\da}{\mbox{-}}
\newcommand{\simref}[1]{\stackrel{#1}{\sim}}
\begin{document}

\pagenumbering{arabic}
\pagestyle{headings}
\def\sof{\hfill\rule{2mm}{2mm}}
\def\llim{\lim_{n\rightarrow\infty}}

\title[On avoidance of patterns by words]{On avoidance of patterns of the form $\sigma\da\tau$ by words over a finite alphabet}

\maketitle

\begin{center}

\author{Toufik Mansour\\
\small Department of Mathematics, University of Haifa, 31905 Haifa, Israel\\[-0.8ex]
\small\texttt{tmansour@univ.haifa.ac.il}\\[1.8ex]
Mark Shattuck\\
\small Department of Mathematics, University of Tennessee, Knoxville, TN 37996\\[-0.8ex]
\small\texttt{shattuck@math.utk.edu}\\[1.8ex]}

\end{center}

\begin{abstract}

Vincular or dashed patterns resemble classical patterns except that some of the letters within an occurrence are required to be adjacent.  We prove several infinite families of Wilf-equivalences for $k$-ary words involving vincular patterns containing a single dash, which explain the majority of the equivalences witnessed for such patterns of length four.  When combined with previous results, numerical evidence, and some arguments in specific cases, we obtain the complete Wilf-classification for all vincular patterns of length four containing a single dash. In some cases, our proof shows further that the equivalence holds for multiset permutations since it is seen to respect the number of occurrences of each letter within a word. Some related enumerative results are provided for patterns $\tau$ of length four, among them generating function formulas for the number of members of $[k]^n$ avoiding any $\tau$ of the form $11a\da b$.
\end{abstract}

\noindent{\em Keywords:} pattern avoidance, vincular patterns, $k$-ary words

\noindent 2010 {\em Mathematics Subject Classification:} 05A15, 05A19

\def\P{POGP}
\def\A{\mathcal{A}}
\def\SS{\frak S}
\def\Ps{POGPs}
\def\mn{\mbox{-}}
\def\newop#1{\expandafter\def\csname #1\endcsname{\mathop{\rm #1}\nolimits}}
\newop{MND}
\section{Introduction}

The Wilf-classification of patterns is a general question in enumerative combinatorics that has been addressed on several discrete structures mostly in the classical case.  We refer the reader to such texts as \cite{HM,Ki1,M} and the references contained therein.  Vincular patterns (also called ``generalized'' or ``dashed'' patterns) resemble classical patterns, except that some of the letters must be consecutive within an occurrence.  The Wilf-classification of vincular patterns of length three for permutations was completed by Claesson \cite{C}.  Progress on the classification of length four vincular patterns for permutations was made in \cite{Ki2,E,B,Ka}, and now all but possibly two equivalences have been shown (see \cite{BS}).

The analogous question concerning avoidance by $k$-ary words has also been addressed.  Burstein and Mansour \cite{BM,BM2} considered the Wilf-classification of vincular patterns of length three for $k$-ary words and found generating function formulas for the number of members of a class in several cases. The comparable problem for length four patterns has been partially addressed in \cite{Ka}, where several equivalences were shown to follow from a more general result on partially commutative monoids generated by a poset.  In this paper, we continue work started in \cite{Ka} and complete the Wilf-classification of vincular patterns of type $(3,1)$ or $(2,2)$ for $k$-ary words.

Let $[k]=\{1,2,\ldots,k\}$.  By a $k$-ary word, we mean a member of $[k]^n$.  The reduction of a $k$-ary word $\alpha$ having $\ell$ distinct letters is the word in $[\ell]$ obtained by replacing all copies of the $i$-th smallest letter of $\alpha$ with $i$ for each $i \in [\ell]$ and is denoted by $\text{red}(\alpha)$.  For example, $\text{red}(694614)=342312$.  The words $\alpha$ and $\beta$ are said to be \emph{isomorphic}, denoted $\alpha \equiv \beta$, if $\text{red}(\alpha)=\text{red}(\beta)$.

By a \emph{pattern} $\sigma=\sigma_1\sigma_2\cdots \sigma_m$, we will mean a member of $[\ell]^m$ for some $\ell$ and $m$ in which each letter in $[\ell]$ appears at least once.  A word $w=w_1w_2\cdots w_n \in [k]^n$ \emph{contains} $\sigma$ as a \emph{classical pattern} if there is a subsequence $w_{i_1}w_{i_2}\cdots w_{i_m}$ for $1 \leq i_1<i_2<\cdots <i_m\leq n$ such that $w_{i_1}w_{i_2}\cdots w_{i_m}\equiv \sigma$.  \emph{Vincular} patterns are similar to classical patterns except that some of the indices $i_j$ are required to be consecutive.  One may regard a vincular pattern as a pair $(\sigma,X)$ for a word $\sigma$ and a set of adjacencies $X \subseteq [m-1]$.  Then we say that $w$ \emph{contains} an \emph{occurrence} (or \emph{copy}) of $(\sigma,X)$ if there exists a subsequence $w_{i_1}w_{i_2}\cdots w_{i_m}$ of $w$ that is isomorphic to $\sigma$ with $i_{j+1}-i_j=1$ for each $j \in X$.  Otherwise, we say that $w$ \emph{avoids} $(\sigma,X)$.

One often expresses $(\sigma,X)$ as the permutation $\sigma$ with a dash between $\sigma_j$ and $\sigma_{j+1}$ if $j \notin X$ and refer to ``the vincular pattern $\sigma$'' without explicitly writing $X$.  For example, $(1342,\{2\})$ is written $1\da34\da2$.  The word $24356213$ contains an occurrence of $1\da34\da2$ as witnessed by the subsequence $2563$, but the subsequence $2453$ is not an occurrence of $1\da34\da2$ since the $4$ and $5$ are not adjacent.  \emph{Classical} patterns are those of the form $(\sigma,\varnothing)$ where no adjacencies are required, while \emph{consecutive} or \emph{subword} patterns are those of the form $(\sigma,[m-1])$ where copies of $\sigma$ must appear as subfactors $w_iw_{i+1}\cdots w_{i+m-1}\equiv \sigma$. Vincular patterns of the form $\sigma_1\da\sigma_2\da\cdots\da\sigma_r$, where each $\sigma_i$ has length $s_i$ for $1 \leq i \leq r$, are said to be of \emph{type} $(s_1,s_2,\ldots,s_r)$.

The reverse of a $k$-ary word $w=w_1w_2\cdots w_n$ is given by $w^r=w_nw_{n-1}\cdots w_1$, and the complement by $w^c=(k+1-w_1)(k+1-w_2)\cdots (k+1-w_n)$.  The reverse of a vincular pattern $\sigma$ having $\ell$ distinct letters is obtained by reading the letters and dashes together in reverse order, while the complement is obtained by replacing each copy of the letter $j$ with $\ell+1-j$ for all $j$, maintaining the relative positions of the dashes.  For example, we have $(13\da23\da4)^r=4\da32\da31$ and $(13\da23\da4)^c=42\da32\da1$.  Observe that a word $w$ contains a pattern $\sigma$ if and only if $w^r$ contains $\sigma^r$ and likewise for the complement.

We will make use of the following notation.  Given a generalized pattern $\sigma$, let $\mathcal{A}_\sigma(n,k)$ denote the subset of $[k]^n$ whose members avoid $\sigma$ and let $a_\sigma(n,k)=|\mathcal{A}_\sigma(n,k)|$.  Two patterns $\sigma$ and $\tau$ are \emph{Wilf-equivalent} if $a_\sigma(n,k)=a_\tau(n,k)$ for all $n$ and $k$, and we denote this by $\sigma\sim\tau$.  From the preceding remarks on symmetry it is clear that
$\sigma \sim \sigma^r \sim \sigma^c \sim \sigma^{rc}$.  The \emph{Wilf-equivalence class} of a pattern $\sigma$ consists of all patterns $\tau$ such that $\tau\sim\sigma$.

It will be useful to refine the sets $\mathcal{A}_\sigma(n,k)$ according to various prefixes.  Given $w=w_1w_2\cdots w_m$, let $\mathcal{A}_\sigma(n,k;w)$ denote the subset of $\mathcal{A}_\sigma(n,k)$ whose members $\pi=\pi_1\pi_2\cdots\pi_n$ satisfy $\pi_1\pi_2\cdots\pi_m=w_1w_2\cdots w_m$ and let $a_\sigma(n,k;w)=|\mathcal{A}_\sigma(n,k;w)|$.

The paper is divided as follows.  Section 2 proves several infinite families of Wilf-equivalences for vincular patterns containing a single dash.  These results have as corollaries the majority of the non-trivial equivalences witnessed for such patterns of length four.  One of the results provides a partial answer to a question raised in \cite{Ka} of finding analogues for $k$-ary words and compositions of some known equivalences on permutations.  In some cases, the family of equivalences is seen to respect the number of occurrences of each letter within a word and hence is actually an equivalence for \emph{multiset permutations} (see \cite{HM2}).

Section 3 proves additional equivalences for patterns of length four and completes the Wilf-classification for patterns of length four.  All non-trivial equivalences, up to symmetry, involving patterns of type $(3,1)$ or $(2,2)$ are listed.  Section 4 provides some related enumerative results.  Among them are formulas for the generating functions for the sequences $a_\tau(n,k)$ where $k$ is fixed and $\tau$ is any pattern of the form $11a\da b$.  We make use of primarily combinatorial methods to prove our results in the second and third sections, while in the last our methods are more algebraic.  In that section, we adapt the \emph{scanning-elements algorithm} described in \cite{FM}, a technique which has proven successful in enumerating length three pattern avoidance classes for permutations, to the comparable problem involving type $(3,1)$ vincular patterns and $k$-ary words.

\section{Some General Equivalences}

In this section, we show that some general families of equivalences hold concerning the avoidance of a vincular pattern having one dash.  We will make use of the following notation and terminology. A sequence of consecutive letters within $\pi=\pi_1\pi_2\cdots \pi_n \in [k]^n$ starting with the letter in the $i$-th position and isomorphic to a subword $\sigma$ will be referred to as an \emph{occurrence of $\sigma$ at index $i$}.  An \emph{ascent} (resp., \emph{descent}) refers to an index $i$ such that $\pi_i<\pi_{i+1}$ (resp., $\pi_i>\pi_{i+1}$).  By a \emph{non-ascent}, we will mean an index $i$ such that $\pi_i\geq \pi_{i+1}$.  A sequence of consecutive letters within $\pi$ possessing a common property will be often be referred to as a \emph{string} (of letters) having the given property.  If $i \geq 1$, then $a^i$ stands for the sequence consisting of $i$ copies of the letter $a$.  Finally, if $m$ and $n$ are positive integers, then $[m,n]=\{m,m+1,\ldots,n\}$ if $m \leq n$ and $[m,n]=\varnothing$ if $m>n$.

Our first result provides a way of extending the equivalence of a pair of subword patterns to a pair of longer vincular patterns containing the subwords.  By a \emph{monotonic} subword, we will mean one whose letters are either weakly increasing or decreasing.

\begin{theorem}\label{th8}
Let $\tau\sim \rho$ be subwords each having largest letter $s$. Let $\sigma$ be a non-empty monotonic subword pattern whose largest letter is $t$. Then $\tau\da (\sigma+s) \sim \rho\da (\sigma+s)$ and $\sigma\da(\tau+t) \sim \sigma\da(\rho+t)$.
\end{theorem}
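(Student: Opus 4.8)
The plan is to prove the first equivalence $\tau\da(\sigma+s)\sim\rho\da(\sigma+s)$ directly and then obtain the second as its reverse-complement image. For that reduction, I would compute $\big(\tau\da(\sigma+s)\big)^{rc}=\sigma^{rc}\da(\tau^{rc}+t)$: complementing sends the low block $\tau$ to $\tau^{c}+t$ and the high block $\sigma+s$ to $\sigma^{c}$, and reversing then swaps their positions. The effect is that the \emph{varying} block (the one running over $\tau$ versus $\rho$) migrates from the low position to the high position, while the fixed monotonic block $\sigma^{rc}$ is still monotonic with largest letter $t$. Since $\tau\sim\rho$ forces $\tau^{rc}\sim\rho^{rc}$ and avoidance counts are invariant under reverse and complement, the family $\sigma\da(\tau+t)\sim\sigma\da(\rho+t)$ is exactly the reverse-complement image of the first family, so it suffices to treat the first.

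First I would record the operative characterization of containment. Because every letter of $\sigma+s$ strictly exceeds every letter of $\tau$, a word $w$ contains $\tau\da(\sigma+s)$ if and only if $w$ has a contiguous factor $A$ with $\text{red}(A)=\tau$ followed, with a gap allowed, by a contiguous factor $B$ with $\text{red}(B)=\sigma$ and $\max(A)<\min(B)$. Thus the only coupling between the two blocks is the single inequality $\max(A)<\min(B)$, and the contiguity of $A$ forces it to lie inside a maximal run of consecutive positions whose values are all below the floor $\min(B)$.

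The heart of the argument is to decouple the low $\tau$-structure from the high $\sigma$-structure. For each position of $w$ I would introduce its \emph{governing floor} $F$, the largest value $\min(B)$ taken over all contiguous $\sigma$-copies $B$ beginning strictly later; note $F$ is nonincreasing along the word. The containment criterion then says that $w$ avoids $\tau\da(\sigma+s)$ precisely when, within every maximal run of consecutive positions whose values all lie below the governing floor, $w$ avoids $\tau$ as a subword over the induced subalphabet $[1,F-1]$. Because $F$ is nonincreasing, these constrained runs nest cleanly, and $w$ decomposes canonically into constrained runs together with free positions; the monotonicity of $\sigma$ and the value separation are what keep the high-block structure localized enough for this to go through. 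Consequently $a_{\tau\da(\sigma+s)}(n,k)$ can be written as a sum, over all arrangements of the high letters (which determine the floors, the run lengths, and the subalphabets), of a product of factors, each being either a power of the alphabet size, from a free region, or a value $a_\tau(\ell,m)$, from a constrained run.

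Granting such a decomposition, the conclusion is immediate: the arrangement data of the high letters is the same sample space for $\tau$ and for $\rho$, and only the constrained factors depend on the low block. Since $a_\tau(\ell,m)=a_\rho(\ell,m)$ for all $\ell$ and $m$ by hypothesis, the same sum computes $a_{\rho\da(\sigma+s)}(n,k)$, giving the desired equality (and, since the factorization is insensitive to the multiset of letters in each run, the equivalence will in fact respect content). I expect the main obstacle to be making this decomposition rigorous: pinning down the governing floor as an invariant of $w$ that does not secretly depend on the low block, handling positions whose value equals a floor, partial copies of $\sigma$, and the weak-monotonicity multiplicities, and verifying genuine factorization, namely that replacing the low factors by arbitrary $\tau$-avoiders can neither create nor destroy a completable high $\sigma$-block. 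This is exactly where the hypotheses that $\sigma$ is monotonic and lies entirely above $\tau$ in value are indispensable.
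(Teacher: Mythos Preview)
Your approach is essentially the paper's. Both reduce the second equivalence to the first by reverse complementation, and both prove the first by isolating a ``high'' $\sigma$-structure that dictates where the ``low'' letters must avoid $\tau$, then invoking $\tau\sim\rho$ on those low pieces. The paper executes this bijectively rather than enumeratively: it fixes a bijection $g$ realizing $\tau\sim\rho$, locates a nested sequence of $\sigma$-occurrences in $\pi$ (the rightmost one of maximal minimum value $a_1$, then the rightmost one of maximal minimum $a_2<a_1$ lying strictly to its right, and so on), and applies $g$ to each maximal string of letters in $[a_{j+1}-1]$ sitting between consecutive marked occurrences. That greedy sequence $(a_j,\ell_j)$ is the discrete analogue of your nonincreasing floor, and because each $a_j$ and $\ell_j$ is computed only from letters at or above the current threshold, the sequence is visibly unchanged when the low strings are replaced by $g$---which is exactly the invariance you flag as the main obstacle in your enumerative factorization. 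The bijective packaging also makes the strong (multiset-respecting) version immediate whenever $g$ itself is letter-preserving.
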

\begin{proof}
Note that the second equivalence follows from the first by reverse complementation. Let $\tau'=\tau\da (\sigma+s)$ and $\rho'=\rho\da (\sigma+s)$.  We will show the first equivalence by constructing a bijection between $\mathcal{A}_{\tau'}(n,k)$ and $\mathcal{A}_{\rho'}(n,k)$.  Clearly, we may assume $s+t\leq k$. By a \emph{$b$-occurrence of $\sigma$} in $\pi$, we will mean one in which the smallest letter in the occurrence is $b$.  Throughout the rest of the proof, let $c=|\sigma|$ denote the length of the pattern $\sigma$

Let $g$ be a bijection which realizes the equivalence between $\tau$ and $\rho$.  Let $\pi=\pi_1\pi_2\cdots \pi_n \in \mathcal{A}_{\tau'}(n,k)$.  If $\pi$ contains no $b$-occurrences of $\sigma$ for which $b \in [s+1,k]$, then let $f(\pi)=\pi$.  Otherwise, we define sequences $\{a_j\}_{j\geq1}$ and $\{\ell_j\}_{j\geq 1}$ as follows.  Let $a_1$ denote the largest $i\in[s+1,k]$ for which there is an $i$-occurrence of $\sigma$ in $\pi$ and suppose the rightmost $a_1$-occurrence of $\sigma$ occurs at index $\ell_1$.  Define $a_j$ and $\ell_j$ inductively for $j>1$ as follows: Let $a_j$ denote the largest letter $i \in [s+1,a_{j-1}-1]$ for which there exists an $i$-occurrence of $\sigma$ whose first letter occurs to the right of position $\ell_{j-1}+c-1$ in $\pi$, with the rightmost $a_j$-occurrence of $\sigma$ occurring at index $\ell_{j}$.  Note that $a_1>a_2>\cdots> a_r$ and $\ell_1<\ell_2<\cdots <\ell_r$ for some $r \geq 1$.

If $\sigma$ is (weakly) increasing and $1 \leq j \leq r$, then no section of $S=\pi_{\ell_j}\pi_{\ell_j+1}\cdots\pi_{\ell_j+c-1}$ can form part of an occurrence of $\rho'$ coming prior to the dash, for otherwise there would be an $i$-occurrence of $\sigma$ for which $i>a_j$ occurring to the right of $S$ in $\pi$, which is impossible.  The same conclusion concerning sections of $S$ holds if $\sigma$ is (weakly) decreasing.  For if a section of $S$ comprised some of the letters coming prior to the dash in an occurrence of $\rho'$, then there would be at least one letter to the left of the dash that is larger than a letter to the right of it, which is impossible.

We then apply $g$ separately to any and all non-empty strings of letters in the alphabet $[a_{j+1}-1]$ occurring amongst  $\pi_{\ell_{j}+c}\pi_{\ell_{j}+c+1}\cdots\pi_{\ell_{j+1}-1}$ for each $1 \leq j<r$ as well as to any strings of letters in the alphabet $[a_1-1]$ occurring amongst $\pi_{1}\pi_{2}\cdots\pi_{\ell_1-1}$.  We leave any letters occurring to the right of the rightmost $a_r$-occurrence of $\sigma$ in $\pi$ unchanged.  The resulting word $f(\pi)$ avoids $\rho'$ and the mapping $f$ is seen to be a bijection.  Note that $\pi$ avoids $\tau'$ if and only if each of the strings described above avoids $\tau$ and likewise for $\rho'$.  To define $f^{-1}$, one then applies $g^{-1}$ to the same strings of letters to which one applied $g$.

For example, if $\tau=12$, $\rho=21$, and $\sigma=123$, then $\tau'=12\da345$ and $\rho'=21\da345$.  If $n=32$, $k=8$, and
$$\pi=\overline{431}7\underline{678}\overline{32}456\overline{332}5\underline{457}\overline{21}345\overline{21}\underline{358}434 \in \mathcal{A}_{\tau'}(32,8),$$
then $a_1=6$, $a_2=4$, and $a_3=3$, with $\ell_1=5$, $\ell_2=17$, and $\ell_3=27$. Note that the mapping $g$ is given by the reversal in this example.  Changing the order of the overlined strings would then imply
$$f(\pi)=\overline{134}7\underline{678}\overline{23}456\overline{233}5\underline{457}\overline{12}345\overline{12}\underline{358}434 \in \mathcal{A}_{\rho'}(32,8).$$
Here the rightmost $a_i$-occurrences of $\sigma$ in $\pi$ and $f(\pi)$ are underlined and the strings of letters that they govern within these words are overlined.
\end{proof}

The equivalence of the patterns $\rho$ and $\sigma$ over $[k]^n$ is said to be \emph{strong} if it respects the number of occurrences of each letter, that is, if $\rho$ and $\sigma$ are equivalent when viewed as patterns over permutations of the same \emph{multiset}. Observe that the proof of the preceding theorem shows further that strong equivalence of $\tau$ and $\rho$ is preserved by $\tau'$ and $\rho'$.

The following avoidance result is known for permutations (see \cite{E, Ki2}): if $\alpha \sim \beta$ are subword patterns of length $k$, then (i) $\alpha \da (k+1) \sim \beta \da (k+1)$, (ii) $\alpha \da (k+2)(k+1) \sim \beta \da (k+2)(k+1)$, and (iii) $\alpha\da (k+1)(k+2) \sim \alpha \da (k+2)(k+1)$. In \cite[Section 4.2]{Ka}, Kasraoui raised the question of finding analogues of (i)-(iii) for $k$-ary words and compositions.  Theorem \ref{th8} provides the requested equivalences in the case of words upon taking $\sigma=1$ or $\sigma=21$ in the first statement and taking $\tau=12$, $\rho=21$ in the second.  Note that for (iii), we obtain a proof of the result only in the case when $\alpha$ is monotonic.  However, modifying the proof of Theorem \ref{th9} below will imply (iii) for all $\alpha$.

Our next result concerns an infinite family of equivalences involving vincular patterns in which the sequence of letters prior to the dash contain exactly one peak and no valleys.

\begin{theorem}\label{th10}
Let $\ell \geq 3$ and $\tau=\tau_1\tau_2\cdots \tau_\ell\da\tau_{\ell+1}$ denote a vincular pattern of length $\ell+1$ such that $\tau_1\leq\tau_2\leq \cdots \leq \tau_{j-1}<\tau_j>\tau_{j+1}\geq \tau_{j+2}\geq \cdots \geq \tau_{\ell}$, where $2 \leq j \leq \ell-1$ and $\tau_j<\tau_{\ell+1}$.  Let $\rho=\rho_1\rho_2\cdots \rho_\ell\da \rho_{\ell+1}$ denote the pattern obtained from $\tau$ by interchanging the letters $\tau_j$ and $\tau_{\ell+1}$.  Then $\tau\sim \rho$ and this equivalence respects the first letter statistic.
\end{theorem}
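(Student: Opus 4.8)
The plan is to construct a first-letter-preserving bijection $f:\mathcal{A}_\tau(n,k)\to\mathcal{A}_\rho(n,k)$, and the first thing I would record is a clean description of what the two patterns forbid. Since $\tau_{j-1}<\tau_j$ and $\tau_j>\tau_{j+1}$ while the prefix is weakly monotonic on either side, the letter $\tau_j$ is the \emph{strict} maximum of $\tau_1\cdots\tau_\ell$, and $\tau_{\ell+1}$, being larger than $\tau_j$, is the unique largest letter of $\tau$. Interchanging $\tau_j$ and $\tau_{\ell+1}$ therefore leaves the reduction of the prefix unchanged, so $\tau_1\cdots\tau_\ell\equiv\rho_1\cdots\rho_\ell$ and both patterns share the \emph{same} consecutive ``mountain'' condition. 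Consequently, for a block $w_p\cdots w_{p+\ell-1}$ isomorphic to this mountain, with peak $\beta=w_{p+j-1}$ and with $\gamma$ the largest letter of the block other than the peak, an occurrence of $\tau$ is completed by any later letter exceeding $\beta$, whereas an occurrence of $\rho$ is completed by any later letter lying strictly between $\gamma$ and $\beta$. Thus $w$ avoids $\tau$ if and only if no mountain block is followed by a letter above its peak, while $w$ avoids $\rho$ if and only if no mountain block is followed by a letter strictly between its peak and $\gamma$.

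With this dichotomy in hand, I would build $f$ by a top-down scan modeled on the construction of the sequences $\{a_j\}$ and $\{\ell_j\}$ in the proof of Theorem~\ref{th8}. Reading peaks in order of decreasing value and, among equal values, from right to left, one isolates the successive governing mountain occurrences and the regions of $w$ they control. Within each region the local operation is not an application of a subword bijection, as in Theorem~\ref{th8}, but an interval relabeling of the candidate \emph{witnesses} (the letters lying to the right of the governing peak): one converts the forbidden zone $(\beta,k]$ for $\tau$ into the forbidden zone $(\gamma,\beta)$ for $\rho$, leaving every mountain block intact. Because the witness of any occurrence lies strictly to the right of a block, the first letter of $w$ is never a witness and hence is never relabeled, which yields the claimed refinement by the first-letter statistic. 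The inverse is obtained by running the analogous prescription on a member of $\mathcal{A}_\rho(n,k)$, applying the reverse interval relabeling within each governed region.

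The main obstacle is proving that the scan is well defined and bijective, i.e.\ that the successive local relabelings do not interfere: a relabeling that repairs the forbidden zone of one peak must neither create nor destroy a mountain occurrence, nor alter which peaks are active further to the left or right. Here the fact that $\tau_{\ell+1}$ (respectively, the swapped value in $\rho$) is the unique top value, together with the strictness of the single peak guaranteed by $2\le j\le\ell-1$, is what forces the governed regions to be nested and disjoint in the way the argument needs. One must also verify that the weak inequalities of the mountain survive each relabeling and that the ``strictly between $\gamma$ and $\beta$'' condition is realized exactly, neither collapsing that gap nor introducing spurious equalities. As an alternative to the explicit bijection, one could condition on the first mountain occurrence together with its peak and set up a recurrence for $a_\tau(n,k;a)$, then check that $a_\rho(n,k;a)$ satisfies the identical recurrence with the same initial data; the bookkeeping is comparable, but this route makes the preservation of the first-letter statistic transparent.
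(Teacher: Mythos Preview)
Your opening observation---that $\tau_1\cdots\tau_\ell$ and $\rho_1\cdots\rho_\ell$ have the same reduction, so the two patterns impose the same consecutive mountain factor and differ only in the admissible range for the trailing witness---is correct and is also the starting point of the paper's argument.

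The bijective plan, however, has a genuine gap. You stipulate ``leaving every mountain block intact'' and relabeling only letters to its right, but for a \emph{fixed} mountain block with peak $\beta$ and second value $\gamma=\max\{w_{j-1},w_{j+1}\}$ the admissible values for a single following letter are $[1,\beta]$ under $\tau$-avoidance and $[1,\gamma]\cup[\beta,k]$ under $\rho$-avoidance, and these sets have sizes $\beta$ and $\gamma+k-\beta+1$ respectively. Concretely, for $\tau=121\da3$ and $\rho=131\da2$ the number of $\tau$-avoiders in $[k]^4$ with prefix $aba$ ($a<b$) is $b$, while the number of $\rho$-avoiders with that same prefix is $a+k-b+1$. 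Hence \emph{no} bijection that fixes the mountain block can exist; the peak value itself must move. The nesting and disjointness considerations you raise do not touch this cardinality obstruction, and the analogy with Theorem~\ref{th8} breaks down precisely because there the local operation is a length-preserving subword bijection on a fixed alphabet, whereas here the two ``allowed alphabets'' for the tail are different.

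The paper in fact follows your ``alternative'' route, proving $a_\tau(n,k;a)=a_\rho(n,k;a)$ by simultaneous induction on $n$ and $k$, but with a device your sketch does not supply. One first restricts (via the induction on $k$) to words that contain every letter of $[k]$. Under that restriction, if the prefix $w_1\cdots w_\ell$ is a mountain then $\tau$-avoidance forces the peak $w_j=k$, while $\rho$-avoidance forces $w_j=\max\{w_{j-1},w_{j+1}\}+1$; the correspondence on prefixes therefore \emph{lowers the peak} from $k$ to $\gamma+1$ rather than relabeling the tail, after which deleting the first $\ell-1$ letters (and bookkeeping which prefix letters recur) reduces both sides to the same smaller instance. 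The first-letter refinement is immediate since $w_1$ is untouched. There is also a wrinkle your outline does not anticipate: when the mountain is flat of common length $b>1$ at both ends ($\tau_1=\cdots=\tau_b$ and $\tau_{\ell-b+1}=\cdots=\tau_\ell$), stripping $\ell-1$ letters leaves a run $w_\ell^{\,b}$, so the induction hypothesis must be strengthened to track prefixes $a^b$ as well as single letters.
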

\begin{proof}
We will show that $a_\tau(n,k;a)=a_\rho(n,k;a)$ for each $a \in [k]$ by induction on $n$ and $k$.  We may restrict attention to the subsets of $\mathcal{A}_\tau(n,k;a)$ and $\mathcal{A}_\rho(n,k;a)$ whose members contain each letter of $[k]$ at least once, since the complementary subsets have the same cardinality by induction on $k$.  Given $1 \leq i \leq \ell$, let $w_1w_2\cdots w_{i}$ be a $k$-ary word of length $i$ with $w_1=a$ such that $w_1w_2\cdots w_{i-1}\equiv \tau_1\tau_2\cdots \tau_{i-1}$ but $w_1w_2\cdots w_{i}\not\equiv \tau_1\tau_2\cdots \tau_{i}$.  Given a word $\alpha$, let $a_\tau^*(n,k;\alpha)$ denote the cardinality of the subset $\mathcal{A}_\tau^*(n,k;\alpha)\subseteq\mathcal{A}_\tau(n,k;\alpha)$ whose members contain every letter in $[k]$ at least once and likewise for $\rho$. By induction on $n$, we have that $a_\tau^*(n,k;w_1w_2\cdots w_i)=a_\rho^*(n,k;w_1w_2\cdots w_i)$, upon deleting the first $i-1$ letters and considering how many distinct letters occurring amongst the first $i-1$ positions occur again beyond the $(i-1)$-st position.

Let us assume now concerning the pattern $\tau$ that at least one of the inequalities $\tau_1\leq \tau_2$ or $\tau_{\ell-1}\geq \tau_\ell$ holds strictly.  We will first complete the proof in this case.  Suppose $w=w_1w_2\cdots w_\ell$ is a $k$-ary word isomorphic to $\tau_1\tau_2\cdots \tau_\ell$ with $w_1=a$.  Note that in order for the set $\mathcal{A}_\tau^*(n,k;w_1w_2\cdots w_\ell)$ to be non-empty, we must have $w_j=k$, and in order for the set $\mathcal{A}_\rho^*(n,k;w_1w_2\cdots w_\ell)$ to be non-empty, we must have $w_j=m$, where $m=\max\{w_{j-1},w_{j+1}\}+1$.  If $w_j=k$ in $w$, then let $\widetilde{w}$ be the word obtained from $w$ by replacing $w_j=k$ with $w_j=m$ and leaving all other letters of $w$ unchanged.

We now partition $\mathcal{A}_\tau(n,k;w)$ and $\mathcal{A}_\rho(n,k;\widetilde{w})$ as follows.  Let $S$ denote a subset of distinct letters amongst $w_1w_2\cdots w_\ell$ excluding the letter $w_\ell$, where $|S|=r$ and $|S|\cap[w_\ell-1]=s$ for some $r$ and $s$, and let $\mathcal{A}_\tau^*(n,k;w,S)\subseteq\mathcal{A}_\tau^*(n,k;w)$ consist of those words in which the only letters amongst those in the prefix $w$ that fail to occur beyond the $(\ell-1)$-st position are those in $S$.  Let $\widetilde{S}$ be a subset of $[k]$ obtained from $S$ by either replacing $k$ with $m$ if $k \in S$ or doing nothing if $k \not\in S$, and let $\mathcal{A}_\rho^*(n,k;\widetilde{w},\widetilde{S})\subseteq\mathcal{A}_\rho^*(n,k;\widetilde{w})$
consist of those words in which the only letters amongst those in the prefix $\widetilde{w}$ that fail to occur beyond the $(\ell-1)$-st position are those in $\widetilde{S}$.

Note that $a_\tau^*(n,k;w)=\sum_S|\mathcal{A}_\tau^*(n,k;w,S)|$ and $a_\rho^*(n,k;\widetilde{w})=\sum_S|\mathcal{A}_\rho^*(n,k;\widetilde{w},\widetilde{S})|$, where the sums range over all possible $S$ with $r$ and $s$ varying.  By deletion of the first $\ell-1$ letters and induction, we have
$$a_\tau^*(n,k;w,S)=a_\tau^*(n-\ell+1,k-r;w_\ell-s)=a_\rho^*(n-\ell+1,k-r;w_\ell-s)=a_\rho^*(n,k;\widetilde{w},\widetilde{S})$$
for each possible choice of $S$, which implies $a_\tau^*(n,k;w)=a_\rho^*(n,k;\widetilde{w})$.  Note that as $w$ ranges over all possible prefixes isomorphic to $\tau_1\tau_2\cdots\tau_\ell$ within members of $\mathcal{A}_\tau^*(n,k;a)$, we have that $\widetilde{w}$ ranges over the comparable prefixes within members of $\mathcal{A}_\rho^*(n,k;a)$.

Collecting all of the cases above in which a prefix starts with the letter $a$ shows that $a_\tau^*(n,k;a)=a_\rho^*(n,k;a)$ and completes the induction.  This then completes the proof in the case when  at least one of the inequalities $\tau_1\leq \tau_2$ or $\tau_{\ell-1}\geq \tau_\ell$ holds strictly.

Suppose now that neither of these inequalities holds strictly for $\tau$ and furthermore that $\tau_1=\tau_2=\cdots=\tau_b$ and $\tau_{\ell-b+1}=\tau_{\ell-b+2}=\cdots=\tau_\ell$, where $1<b<\min\{j,\ell-j+1\}$ is maximal.  In this case, we proceed by induction and show the following:\\

\noindent (i) $a_\tau^*(n,k;a)=a_\rho^*(n,k;a)$, and \\

\noindent (ii) $a_\tau^*(n,k;a^b)=a_\rho^*(n,k;a^b)$.\\

By the preceding, we only need to show that (ii) follows by induction in the case when the words in question have prefix $w=w_1w_2\cdots w_\ell$ isomorphic to $\tau_1\tau_2\cdots \tau_\ell$ with $w_1=a$.  Using the same notation as above, we have by the induction hypothesis for (ii) that
$$a_\tau^*(n,k;w,S)=a_\tau^*(n-\ell+b,k-r;(w_\ell-s)^b)=a_\rho^*(n-\ell+b,k-r;(w_\ell-s)^b)=a_\rho^*(n,k;\widetilde{w},\widetilde{S})$$
for all $S$, which implies $a_\tau^*(n,k;w)=a_\rho^*(n,k;\widetilde{w})$.  Letting $w$ vary implies (ii) in the $n$ case and completes the induction of (i) and (ii), which finishes the proof.
\end{proof}

Our next result shows the equivalence of a family of patterns each differing from one another by a single letter.

\begin{theorem}\label{th6}
If $i \geq 2$ and $c,d \in [i]$, then $12\cdots i\da c \sim 12\cdots i \da d$.
\end{theorem}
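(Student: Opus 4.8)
The plan is to establish the chain of adjacent equivalences $12\cdots i\da c \sim 12\cdots i\da (c+1)$ for each $1\le c\le i-1$; transitivity then delivers $12\cdots i\da c \sim 12\cdots i\da d$ for all $c,d\in[i]$. Note that genuine work is required, since the symmetries do not collapse this family: $(12\cdots i\da c)^r = c\da i(i-1)\cdots1$ is of type $(1,i)$ and $(12\cdots i\da c)^c = i(i-1)\cdots1\da(i+1-c)$, neither of which is of the form $12\cdots i\da d$. It is convenient first to record what an occurrence means: since $1,2,\ldots,i$ are distinct and $c\in[i]$, a word $\pi$ contains $12\cdots i\da c$ exactly when there is a strictly increasing factor $\pi_p<\pi_{p+1}<\cdots<\pi_{p+i-1}$ (an occurrence of the subword $12\cdots i$ at some index $p$) together with a later position $q>p+i-1$ at which the value $\pi_{p+c-1}$ recurs, i.e. $\pi_q=\pi_{p+c-1}$. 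Thus avoidance says that for every length-$i$ increasing window the $c$-th entry of that window never reappears past its end.

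Following the method of Theorem \ref{th10}, I would argue by induction on $n$ and $k$, refining $\mathcal{A}_{12\cdots i\da c}(n,k)$ and $\mathcal{A}_{12\cdots i\da(c+1)}(n,k)$ by a prefix and reducing to words that use every letter of $[k]$. As there, the members failing to be surjective are matched across the two classes by induction on $k$ (restrict to the actual alphabet), so we may assume each letter of $[k]$ occurs. We then split on the longest initial increasing run. If $\pi$ does not begin with an increasing factor of length $i$ — say $\pi_1<\cdots<\pi_{j-1}$ but $\pi_{j-1}\ge\pi_j$ with $j\le i$ — then no occurrence of either pattern can start at a position below $j$, so deleting the first $j-1$ letters while recording which of their values recur later passes to a shorter word and invokes the inductive hypothesis, precisely as in the opening reduction of the proof of Theorem \ref{th10}.

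The substance is the remaining case, in which $\pi$ opens with an increasing factor $\pi_1<\pi_2<\cdots<\pi_i$ isomorphic to $12\cdots i$. By the characterization above, avoidance of $12\cdots i\da c$ forbids the pivot value $\pi_c$ from recurring after this opening window, while avoidance of $12\cdots i\da(c+1)$ forbids $\pi_{c+1}$; together with surjectivity this pins down the occurrence of the relevant pivot (a singleton at its position, in the case of a run of length exactly $i$). The key step is then a value-adjusting bijection on these critical prefixes — the analogue of the peak swap $k\leftrightarrow m$ used for Theorem \ref{th10} — that interchanges the constraint attached to $\pi_c$ with the one attached to $\pi_{c+1}$ while preserving the remainder of the increasing run and the distribution of the later letters, after which the deletion-and-induction step applies uniformly to both classes and, collecting over all admissible prefixes, completes the induction.

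The main obstacle I anticipate is exactly this bijection on the critical prefixes, for two reasons. First, the constraints for $c$ and $c+1$ are not related by an obvious symmetry: raising $c$ to $c+1$ both moves which entry is the pivot and shortens the window's reach, so the forbidden zones for a recurrence begin at different positions. Second, when the initial increasing run has length exceeding $i$, several overlapping windows share entries and each contributes its own pivot, so the bijection must be defined compatibly across all windows of a maximal run; managing these overlaps is the analogue of the sub-case analysis with the repeated-letter blocks $\tau_1=\cdots=\tau_b$ in Theorem \ref{th10}, and it is where the care is concentrated. Once the prefix map is shown to respect both the avoidance conditions and the counts of recurring letters, the induction closes.
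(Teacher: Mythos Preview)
Your framework is sound and in fact coincides with the paper's: induct on $n$ refining by the first letter, split according to the position $j$ of the first non-ascent, and dispose of $j<i$ by deleting the first $j$ letters. The gap is in the case $j\ge i$: you never supply the bijection you describe, and the direction you point toward---a value-adjusting map on the \emph{prefix}, modeled on the $k\leftrightarrow m$ peak swap of Theorem~\ref{th10}---is harder than what is actually needed. (The reduction to surjective words, borrowed from that proof, is likewise unnecessary here.)

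The observation you are missing is that the prefix $\alpha'=a_1<a_2<\cdots<a_j$ should be kept \emph{fixed} for both patterns, and that your ``overlapping windows'' collapse cleanly: as $p$ ranges over $1,\ldots,j-i+1$, the $c$-th entries $a_{p+c-1}$ of the successive windows sweep out exactly the contiguous block $\{a_c,a_{c+1},\ldots,a_{c+j-i}\}$. Thus a word with this prefix avoids $12\cdots i\da c$ if and only if its suffix $\pi_{j+1}\cdots\pi_n$ avoids the pattern and uses only letters of $[k]\setminus\{a_c,\ldots,a_{c+j-i}\}$; the same holds for $d$ with the block $\{a_d,\ldots,a_{d+j-i}\}$. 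Both blocks have size $j-i+1$, so standardizing lands the suffixes in $\mathcal{A}_{12\cdots i\da c}(n-j,\,k-j+i-1;\,\tilde s)$ and $\mathcal{A}_{12\cdots i\da d}(n-j,\,k-j+i-1;\,t^*)$ respectively; one matches $s\in[a_j]\setminus\{a_c,\ldots,a_{c+j-i}\}$ with $t\in[a_j]\setminus\{a_d,\ldots,a_{d+j-i}\}$ so that $\tilde s=t^*$ and invokes the inductive hypothesis. No prefix modification is required, and the equivalence for arbitrary $c,d$ falls out directly rather than via a chain $c\to c+1$. (Your side remark that raising $c$ ``shortens the window's reach'' is also off: the window length is always $i$ and the forbidden zone always begins at position $p+i$; only \emph{which} value is forbidden changes, which is precisely why the two blocks have equal size.)
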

\begin{proof}
We'll show that $\mathcal{A}_{12\cdots i\da c}(n,k;a)$ and $\mathcal{A}_{12\cdots i\da d}(n,k;a)$ have the same cardinality for all $a \in [k]$ by induction.  Clearly, both sets contain the same number of strictly increasing members, so let us restrict attention to those that aren't.  Let $\alpha=a_1a_2\cdots a_j a_{j+1}$, where $j \geq 1$ and $a=a_1<a_2<\cdots<a_j\geq a_{j+1}$.  If $j<i$, then both sets contain the same number of members starting with the prefix $\alpha$, by induction, upon deleting the first $j$ letters, so let us assume $j \geq i$.  Then members of $\mathcal{A}_{12\cdots i\da c}(n,k;\alpha)$ can only contain letters in $[k]-\{a_c,a_{c+1},\ldots,a_{c+j-i}\}$ beyond the $j$-th position and a similar remark applies to $\mathcal{A}_{12\cdots i\da d}(n,k;\alpha)$.

Let $s$ be given and suppose $\pi \in \mathcal{A}_{12\cdots i \da c}(n,k;\alpha)$ has $s$ as its $(j+1)$-st letter.  Let $\widetilde{s}$ denote the value corresponding to $s$ when the set $[a_j]-\{a_c,a_{c+1},\ldots,a_{c+j-i}\}$ is standardized.  Pick $t \in [a_j]-\{a_d,a_{d+1},\ldots,a_{d+j-i}\}$ such that the standardization $t^*$ of $t$ relative to this set satisfies $t^*=\widetilde{s}$.  Let $\alpha'=a_1a_2\cdots a_j$.  By induction, we have
$$a_{12\cdots i\da c}(n,k;\alpha's)=a_{12\cdots i\da c}(n-j,k-j+i-1;\widetilde{s})=a_{12\cdots i\da d}(n-j,k-j+i-1;t^*)=a_{12\cdots i\da d}(n,k;\alpha't).$$
Note that as $s$ varies over all the possible values in $[a_j]-\{a_c,a_{c+1},\ldots,a_{c+j-i}\}$, $t$ varies over all possible values in $[a_j]-\{a_d,a_{d+1},\ldots,a_{d+j-i}\}$.  It follows that the subset of $\mathcal{A}_{12\cdots i\da c}(n,k;a)$ whose members have their first non-ascent at position $j$ has the same cardinality as the comparable subset of $\mathcal{A}_{12\cdots i\da d}(n,k;a)$.  Allowing $j$ to vary implies $a_{12\cdots i\da c}(n,k;a)=a_{12\cdots i\da d}(n,k;a)$ as desired.
\end{proof}

Our next result concerns the equivalence of a family of patterns of the same length consisting of distinct letters that are strictly increasing except for the final letter to the right of the dash.

\begin{theorem}\label{th11}
If $r \geq 3$ and $2 \leq u,v \leq r$, then $1\cdots(u-1)(u+1)\cdots (r+1)\da u \sim 1\cdots(v-1)(v+1)\cdots(r+1)\da v$.
\end{theorem}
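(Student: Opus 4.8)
The plan is to reduce, by transitivity, to proving the single step $\sigma_u \sim \sigma_{u+1}$ for each $u$ with $2 \le u \le r-1$, where I abbreviate $\sigma_u = 1\cdots(u-1)(u+1)\cdots(r+1)\da u$. Imitating the inductive method of Theorem~\ref{th6}, I would establish the refined equality $a_{\sigma_u}(n,k;a)=a_{\sigma_{u+1}}(n,k;a)$ for every $a\in[k]$ by strong induction on $n$, the cases with $n$ small or $k\le r$ being immediate since then every word of $[k]^n$ avoids both patterns. Fix $\pi\in\mathcal{A}_{\sigma_u}(n,k;a)$ and condition on its maximal initial strictly increasing run $\alpha=a_1<a_2<\cdots<a_j$, where $a_1=a$. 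If $j=n$ then $\pi$ is increasing and is counted once on either side. If $1\le j<r$, then no window of $r$ consecutive increasing letters can meet a position $\le j$, so writing $\pi=\alpha\beta$ with $\beta_1\le a_j$, avoidance of $\sigma_u$ by $\pi$ is equivalent to avoidance by $\beta$ over the full alphabet $[k]$; matching each such $\alpha$ with itself and applying the induction hypothesis to $\beta$, which has length $n-j<n$, handles this case for both patterns at once.

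The substantive case is $r\le j<n$. Here the key structural observation is that every occurrence of $\sigma_u$ whose pre-dash factor lies inside $\alpha$ uses as its factor one of the windows $a_p<a_{p+1}<\cdots<a_{p+r-1}$ with $1\le p\le j-r+1$, together with a later letter lying strictly in the gap $(a_{p+u-2},a_{p+u-1})$; no letter of the run can play this role, as the run is increasing. Consequently $\pi=\alpha\beta$ avoids $\sigma_u$ if and only if $\beta$ avoids $\sigma_u$ and no letter of $\beta$ falls in the forbidden band $\bigcup_{p=1}^{j-r+1}(a_{p+u-2},a_{p+u-1})=(a_{u-1},a_{j-r+u})$ with the interior run-values removed. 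Thus $\beta$ ranges over the $\sigma_u$-avoiding words, with first letter at most $a_j$, over the available alphabet $V_u(\alpha)$ obtained from $[k]$ by deleting the non-run integers lying in $(a_{u-1},a_{j-r+u})$; standardizing $V_u(\alpha)$ shows that the number of members of $\mathcal{A}_{\sigma_u}(n,k;a)$ with initial run $\alpha$ equals $\sum_{b\le\rho_u}a_{\sigma_u}(n-j,p_u;b)$, where $p_u=|V_u(\alpha)|$ and $\rho_u$ is the rank of $a_j$ within $V_u(\alpha)$.

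The crux is to reconcile the two sides even though $p_u$ and $\rho_u$ genuinely depend on $u$: the band for $\sigma_{u+1}$ is the band for $\sigma_u$ shifted up by exactly one gap. To exploit this I would pass to the spacings $s_0=a_1-1$, $s_i=a_{i+1}-a_i-1$ for $1\le i<j$, and $s_j=k-a_j$ of the run. A short computation gives the clean formulas $p_u=k-B_u$ and $\rho_u=a_j-B_u$, where $B_u=s_{u-1}+s_u+\cdots+s_{j-r+u-1}$ is the sum of the $L:=j-r+1$ consecutive spacings underlying the band, while $a_1$ and $a_j$ depend only on $s_0$ and $s_j$. The band of $\sigma_{u+1}$ corresponds to the block $B_{u+1}=s_u+\cdots+s_{j-r+u}$, of the same length $L$, obtained from $B_u$ by dropping $s_{u-1}$ and adjoining $s_{j-r+u}$. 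Hence the involution $\alpha\mapsto\alpha'$ that simply transposes $s_{u-1}$ and $s_{j-r+u}$ — both interior, since $1\le u-1$ and $j-r+u\le j-1$ under $2\le u\le r-1$ — fixes $s_0$ and $s_j$, so it preserves $a_1=a$ and $a_j$, and it carries $B_u(\alpha)$ to $B_{u+1}(\alpha')$. Therefore $p_u(\alpha)=p_{u+1}(\alpha')$ and $\rho_u(\alpha)=\rho_{u+1}(\alpha')$, and matching runs by this bijection while invoking the induction hypothesis term by term yields $a_{\sigma_u}(n,k;a)=a_{\sigma_{u+1}}(n,k;a)$, completing the induction.

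I expect the main obstacle to be verifying cleanly that the initial run and the suffix interact only through the forbidden band — that is, that imposing the band condition leaves no occurrence of $\sigma_u$ straddling the run/suffix boundary, so that $\pi$ avoids $\sigma_u$ exactly when $\beta$ does — and then recognizing that the entire $u$-dependence of the reduced problem is concentrated in the single block sum $B_u$ and is undone by transposing two spacings. Establishing the affine identities $p_u=k-B_u$ and $\rho_u=a_j-B_u$ and checking the index bounds $1\le u-1$ and $j-r+u\le j-1$ that keep the transposed spacings interior (thereby preserving both $a$ and $a_j$) are the routine verifications that drive the argument.
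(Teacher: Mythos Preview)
Your proof is correct and follows essentially the same strategy as the paper: reduce to the adjacent case $v=u+1$, prove the refined equality $a_{\sigma_u}(n,k;a)=a_{\sigma_{u+1}}(n,k;a)$ by induction on $n$, condition on the maximal initial strictly increasing run, identify the forbidden values that the suffix must avoid, standardize, and match runs by a bijection that equates the reduced alphabet size and the rank bound for the next letter. The structural claim that occurrences with pre-dash part meeting the run are governed exactly by the band $\bigcup_p(a_{p+u-2},a_{p+u-1})$ is the same observation the paper uses, and your formulas $p_u=k-B_u$, $\rho_u=a_j-B_u$ recover exactly the paper's reduced parameters $k-x_{s+t-r}+x_{s-2}+t-r+2$ and the corresponding $\widetilde{b}$ (with $s=u$, $t=j-1$).

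The one genuine difference is the bijection on runs. In gap coordinates the paper's map $x\mapsto y$ amounts to a cyclic shift of the block $(s_{u-1},s_u,\ldots,s_{j-r+u})$, sending it to $(s_{j-r+u},s_{u-1},\ldots,s_{j-r+u-1})$; your map is instead the transposition of the two endpoint gaps $s_{u-1}\leftrightarrow s_{j-r+u}$. Both fix $s_0$ and the total $\sum_{i=1}^{j-1}s_i$ (hence $a$ and $a_j$) and both achieve $B_{u+1}(\alpha')=B_u(\alpha)$, so either works. Your transposition is a bit cleaner (it is an involution and requires only the two index checks $u-1\ge1$ and $j-r+u\le j-1$), while the paper's cyclic shift makes the correspondence between individual suffix-initial letters $b\leftrightarrow c$ explicit rather than summing over $b\le\rho_u$ as you do.
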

\begin{proof}
It is enough to show the equivalence in the case when $v=u+1$.  Let $\tau=1\cdots (s-1)(s+1)\cdots (r+1)\da s$ and $\rho=1\cdots s(s+2)\cdots (r+1)\da (s+1)$, where $2 \leq s \leq r-1$.  We show by induction on $n$ that $a_\tau(n,k;a)=a_\rho(n,k;a)$ for all $k$ and any $a \in [k]$.  Deletion of the first $i$ letters implies by induction that the sets $A_\tau(n,k;a)$ and $A_\rho(n,k;a)$ have the same cardinality if $w=w_1w_2\cdots w_{i+1}$ is a $k$-ary word satisfying $a=w_1<w_2<\cdots<w_i\geq w_{i+1}$ where $1 \leq i \leq r-1$.  So assume $\pi=\pi_1\pi_2\cdots \pi_n \in \mathcal{A}_\tau(n,k;a)$ starts with at least $r-1$ ascents and contains at least one non-ascent.  That is, for some $t\geq r-1$ and $x_1<x_2<\cdots<x_t$, we have $\alpha:=\pi_1\pi_2\cdots \pi_{t+2}=a(a+x_1)(a+x_2)\cdots(a+x_t)b$, where $b \leq a+x_t$.  Deleting the first $t+1$ letters and noting some forbidden letters in $[k]$, we have $$a_\tau(n,k;\alpha)=a_\tau(n-t-1,k-x_{s+t-r}+x_{s-2}+t-r+2;\widetilde{b})
 ,$$
where $x_0=0$ and $\widetilde{b}$ denotes the relative size of $b$ in the set $[a+x_t]-\cup_{i=0}^{t-r+1}[a+x_{i+s-2}+1,a+x_{i+s-1}-1]$.

Let $d=x_{s+t-r+1}-x_{s+t-r}$. Define the word $\alpha'$ by $\alpha'=a(a+y_1)(a+y_2)\cdots (a+y_t)c$, where
$$y_i=\begin{cases} x_i, & \text{if} \text{~}\text{~} 1\leq i \leq s-2 \text{ or } s+t-r+1\leq i \leq t; \\ x_{i-1}+d, & \text{if} \text{~}\text{~} s-1 \leq i \leq s+t-r,\\ \end{cases}$$
the $x_i$ are as before, and $c \leq a+y_t$ is to be determined.  Reducing letters we have
$$a_\rho(n,k;\alpha')=a_\rho(n-t-1,k-x_{s+t-r}+x_{s-2}+t-r+2;c^*),$$
where $c^*$ denotes the relative size of $c$ in the set
$$[a+x_t]-\cup_{i=0}^{t-r+1}[a+x_{i+s-2}+d+1,a+x_{i+s-1}+d-1].$$
Observe that there are $a+x_t+x_{s-2}-x_{s+t-r}+t-r+2$ possibilities for $\widetilde{b}$ and the same number of possibilities for $c^*$.  So given $b$, if we pick $c$ such that $c^*=\widetilde{b}$, it follows by induction that $a_\tau(n,k;\alpha)=a_\rho(n,k;\alpha')$ for such a choice of $c$.  Note that as $\alpha$ varies over all prefixes of the given form within members of $\mathcal{A}_\tau(n,k;a)$ containing at least $r-1$ ascents prior to the first non-ascent, we have that $\alpha'$ varies over the comparable sequences within the members of $\mathcal{A}_\rho(n,k;a)$.  Since both sets clearly contain the same number of strictly increasing members, it follows that $a_\tau(n,k;a)=a_\rho(n,k;a)$, which completes the induction and proof.
\end{proof}

Our final result of this section provides a way of generating equivalences from subwords and explains several equivalences witnessed for patterns of small size.

\begin{theorem}\label{th9}
If $\sigma$ is a subword whose largest letter is $r$, then $\sigma\da r(r+1)\sim \sigma \da(r+1)r$.
\end{theorem}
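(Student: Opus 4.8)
The plan is to set $m=|\sigma|$ and first restate containment in a workable form: since $\sigma$ is a subword with largest letter $r$, a word $\pi$ contains $\sigma\da r(r+1)$ precisely when it has a factor-copy $B=\pi_p\cdots\pi_{p+m-1}$ of $\sigma$ together with a later adjacent ascent $\pi_j<\pi_{j+1}$ (with $j\ge p+m$) satisfying $\pi_j=\max B$, while it contains $\sigma\da(r+1)r$ precisely when it has such a factor-copy $B$ together with a later adjacent descent $\pi_j>\pi_{j+1}$ satisfying $\pi_{j+1}=\max B$. Thus the two patterns differ only in the orientation of the adjacent pair whose lower letter must equal the maximum of a preceding factor-copy of $\sigma$. (Equivalently, applying reverse complementation turns the claim into $12\da(\eta+1)\sim 21\da(\eta+1)$ for the subword $\eta=\sigma^{rc}$, in which the distinguished value is the minimum of a later factor-copy of $\eta$; this is the non-monotonic analogue of the first equivalence of Theorem \ref{th8} with $\tau=12$, $\rho=21$, a case Theorem \ref{th8} itself does not cover.)

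Following the template of Theorems \ref{th10}, \ref{th6}, and \ref{th11}, I would prove $a_{\sigma\da r(r+1)}(n,k;a)=a_{\sigma\da(r+1)r}(n,k;a)$ for every $a\in[k]$ by induction on $n$, with an inner induction on $k$ to dispose of words not using every letter. Words of $[k]^n$ possessing no factor-copy of $\sigma$ lie in both avoidance classes and so contribute equally; hence one restricts to words containing at least one factor-copy of $\sigma$ and conditions on the leftmost such copy $B=\pi_p\cdots\pi_{p+m-1}$, recording its maximum value $v$. No occurrence of either pattern can be completed before position $p+m$, so members of both classes share the prefix $\pi_1\cdots\pi_{p+m-1}$ freely, and the first pair that can create an occurrence is $(\pi_{p+m},\pi_{p+m+1})$. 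I would then delete the first $p+m-1$ letters, reduce the alphabet by the distinct prefix-values that do not recur beyond position $p+m-1$ (exactly as in the proof of Theorem \ref{th10}), and match the admissible continuations in the two worlds by standardizing in the reduced alphabet. The decisive local exchange is that in the $\sigma\da r(r+1)$-world the value $v$ is barred as the bottom of an ascent, whereas in the $\sigma\da(r+1)r$-world it is barred as the bottom (right letter) of a descent; choosing the continuation letter so that its relative size agrees after standardization converts the one forbidden orientation into the other, after which the surviving suffix is again an instance of the same equivalence and is handled by the induction hypothesis.

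The main obstacle is that $\sigma$ is an arbitrary, not necessarily monotonic, subword, so it may admit several factor-copies with distinct maxima, and new copies keep appearing as one scans to the right; the set of values that are \emph{activated} (forbidden as an ascent- or descent-bottom) therefore grows, and the overlapping windows are far less uniform than the sliding increasing windows exploited in Theorems \ref{th6} and \ref{th11}. The care needed is to organize the deletion-and-standardization step so that the suffix problem is genuinely self-similar, that is, so that the distinguished value $v$ and the activated maxima are packaged to leave an instance of $\sigma\da r(r+1)\sim\sigma\da(r+1)r$ on a shorter word over a smaller alphabet; this requires tracking precisely which prefix values recur and standardizing the forbidden sets consistently across both worlds. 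When $\sigma$ is taken monotonic the argument specializes to statement (iii) for all such $\alpha$, in agreement with the remark following Theorem \ref{th8}.
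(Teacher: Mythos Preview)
Your reformulation of containment is correct, and the idea of conditioning on the leftmost factor-copy of $\sigma$ is natural, but the inductive scheme as you describe it does not close. The difficulty you yourself flag in the last paragraph is not a matter of care in bookkeeping; it is a genuine obstruction to the stated induction hypothesis. Once you have located the leftmost factor-copy $B$ with maximum $v$ and deleted the prefix through position $p+m-1$, the remaining suffix must do two things in the $\sigma\da r(r+1)$-world: it must avoid $\sigma\da r(r+1)$, \emph{and} it must never place $v$ at the bottom of an adjacent ascent anywhere, because the deleted copy $B$ still witnesses the left part of the pattern. This second condition is a global constraint on every occurrence of $v$ in the suffix, not a single ``local exchange'' at the first continuation letter; it is not equivalent to avoiding $\sigma\da r(r+1)$ on any reduced alphabet. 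Hence the suffix problem is not an instance of the same equivalence, and the hypothesis $a_{\sigma\da r(r+1)}(n,k;a)=a_{\sigma\da(r+1)r}(n,k;a)$ is too weak to feed the recursion. Strengthening it to track a set of ``activated'' values would require proving a much stronger statement (equality of counts under an arbitrary finite set of forbidden ascent-bottoms versus the matching set of forbidden descent-bottoms), and nothing in your outline does that.

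The paper sidesteps this entirely by giving an explicit multiset-preserving bijection rather than an induction on $n$. It processes the activated values one at a time, from smallest to largest: for the smallest $k_1$ that is the maximum of some factor-copy of $\sigma$, it looks at the maximal runs of letters $\ge k_1$ lying to the right of the leftmost such copy and, within each run, moves all copies of $k_1$ from the back to the front. This converts every forbidden $k_1$-ascent into a $k_1$-descent in one stroke, without disturbing smaller letters, and one then repeats for the next activated value $k_2>k_1$. The point is that the operation at level $k_i$ leaves all letters below $k_i$ fixed, so no new lower-level occurrences of $\sigma$ are created, and the process terminates. This value-by-value bijection is precisely what absorbs the ``growing set of activated maxima'' that breaks your inductive reduction.
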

\begin{proof}
Let $\tau=\sigma\da r(r+1)$ and $\rho=\sigma\da (r+1)r$. By an \emph{$s$-occurrence of} $\sigma$, $\tau$ or $\rho$ within $\pi$, we will mean an occurrence in which the role of the $r$ is played by the actual letter $s$ in $\pi$. We will define a bijection between $\mathcal{A}_\tau(n,k)$ and $\mathcal{A}_\rho(n,k)$.  Suppose $\pi=\pi_1\pi_2\cdots \pi_n \in \mathcal{A}_{\tau}(n,k)$.  If $\pi$ has no $s$-occurrences of $\sigma$ for any $s$, then let $f(\pi)=\pi$.  Otherwise, let $k_1$ be the smallest $s \in [k]$ such that $\pi$ contains at least one $s$-occurrence of $\sigma$.  Suppose that the leftmost $k_1$-occurrence of $\sigma$ within $\pi$ occurs at index $j_1$.  Let $S_1$ denote the subsequence of $\pi$ consisting of any letters in $[k_1,k]$ amongst $\pi_{j_1+a}\pi_{j_1+a+1}\cdots\pi_n$, where $a=|\sigma|$ denotes the length of $\sigma$.  Then $S_1$ may be empty or it may consist of one or more non-empty strings of letters in $[k_1,k]$ separated from one another by letters in $[k_1-1]$.  Within each of these strings, any $k_1$ letters must occur after any letters in $[k_1+1,k]$ in order to avoid an occurrence of $\tau$ in $\pi$.  We move any copies of $k_1$ from the back of each string to the front, and let $\pi_1$ denote the resulting word.

Note that $\pi_1$ contains no $k_1$-occurrences of $\rho$ and that $\pi_1=\pi$ if $\pi$ contains no such occurrences of $\rho$.  Furthermore, we have that $\pi_1$ contains no $s$-occurrences of $\rho$ for any $s \in [k_1-1]$ as well.  To see this, suppose to the contrary that it contains a $b$-occurrence of $\rho$ for some $b \in [k_1-1]$.  By the minimality of $k_1$, such an occurrence of $\rho$ must involve some letter $c\in[k_1+1,k]$ that was moved in the transition from $\pi$ to $\pi_1$ playing the role of the $r+1$.  But then $\pi$ would have contained a $b$-occurrence of $\rho$ (with $k_1$ playing the role of the $r+1$), contradicting the minimality of $k_1$.

We define the $\pi_i$ recursively as follows.  Define $k_i$, $i>1$, by letting $k_i$ be the smallest $s \in [k_{i-1}+1,k]$ for which there exists an $s$-occurrence of $\sigma$ in $\pi_{i-1}$.  Let $j_i$ denote the index of the leftmost $k_i$-occurrence of $\sigma$ in $\pi_{i-1}$ and $S_i$ be the subsequence of $\pi_{i-1}$ consisting of letters in $[k_i,k]$ lying to the right of the $(j_i+a-1)$-st position.  Again, we move any copies of $k_i$ from the back to the front within each non-empty string of letters in $S_i$.  Let $\pi_i$ denote the resulting word.  Note that no letters in $[k_i-1]$ are moved in this step (or in later ones).  It may be verified that $\pi_i$ contains no $s$-occurrences of $\rho$ for any $s \in [k_i]$ since no $s$-occurrences of $\sigma$ for $s<k_i$ are created in the transition from $\pi_{i-1}$ to $\pi_i$ as well as no $k_i$-occurrences of $\sigma$ to the left of the leftmost such occurrence in $\pi_{i-1}$.  Since the $k_i$ strictly increase, the process must terminate after a finite number of steps, say $t$.  Let $f(\pi)=\pi_t$.  Note that $f(\pi)$ avoids $s$-occurrences of $\rho$ for all $s \in [k_t]$, and hence avoids $\rho$, since $\pi_t$ doesn't even contain any $s$-occurrences of $\sigma$ for $s \in [k_t+1,k]$.

The mapping $f$ is seen to be a bijection.  To define its inverse, consider whether or not $f(\pi)$ contains an $s$-occurrence of $\sigma$ for any $s$, and if so, the largest $s \in [k]$ for which there is an $s$-occurrence of $\sigma$ in $f(\pi)$, along with the position of the leftmost such $s$-occurrence.  Note that this $s$ must be $k_t$, by definition, for if not and there were a larger one, then the procedure described above in generating $f(\pi)$ would not have terminated after $t$ steps.  Within strings of letters in $[k_t,k]$ occurring to the right of the leftmost $k_t$-occurrence of $\sigma$ in $f(\pi)$, re-order the letters so that the $k_t$'s succeed rather than precede any other letters.  Note that this undoes the transformation step from $\pi_{t-1}$ to $\pi_t$, where $\pi_0=\pi$.  On the word that results, repeat this procedure, considering the largest $s \in [k_t-1]$ occurring in some $s$-occurrence of $\sigma$ and then the leftmost position of such an occurrence.

For example, if $n=30$, $k=6$, $\sigma=11$, and
$$\pi=\pi_0=215562213422\underline{1}\underline{1}65354435436542\overline{1}\overline{1} \in \mathcal{A}_{11\da12}(30,6),$$
then
$$\pi_1=21556\underline{22}134\overline{22}11116535443543654\overline{2}, \quad \pi_2=215562212234111126535\underline{44}35\overline{4}365\overline{4},$$

$$\pi_3=21\underline{55}62212234111126\overline{5}3544345346\overline{5},$$
and
$$f(\pi)=\pi_4=215562212234111125635443453456 \in \mathcal{A}_{11\da21}(30,6).$$

(Letters corresponding the leftmost $k_i$-occurrence, $1 \leq i \leq 4$, of $\sigma$ are underlined in each step, and any $k_i$ that must be moved in the $i$-th step are overlined.)
\end{proof}

Note that the preceding proof shows further the strong equivalence of the patterns $\sigma\da r(r+1)$ and $\sigma \da(r+1)r$.

\section{Patterns of length four}

In this section, we consider some further equivalences for patterns of length four containing a single dash. Combining the results of this section with those from the prior and from \cite{Ka} will complete the Wilf-classification for avoidance of a single $(3,1)$ or $(2,2)$ pattern by $k$-ary words.  The Wilf-equivalence tables for these patterns are given at the end of this section.

\subsection{Further results}

In this subsection, we provide further equivalences concerning the avoidance of $(3,1)$ patterns by $k$-ary words.  Our first result generalizes the equivalence $113\da2\sim133\da2$.

\begin{theorem}\label{th3}
If $i\geq 2$, then $1^i3\da2 \sim 13^i\da2$.
\end{theorem}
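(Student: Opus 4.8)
The plan is to pass to the run-length encoding of a word and to show that, for a fixed sequence of run-values, avoidance of either pattern constrains only the \emph{lengths} of a prescribed set of runs. Write $\pi\in[k]^n$ as $\pi=v_1^{\ell_1}v_2^{\ell_2}\cdots v_t^{\ell_t}$ with $v_j\neq v_{j+1}$ and each $\ell_j\geq1$, and set $\tau=1^i3\da2$, $\rho=13^i\da2$. Since $i$ equal consecutive letters must lie inside a single run, I would first reinterpret occurrences as follows. A copy of the factor $1^i3$ (that is, $a^ic$ with $a<c$) occurs exactly at an ascent $v_j<v_{j+1}$ with $\ell_j\geq i$, the ``$c$'' being forced to be the first letter of run $j+1$; a copy of $13^i$ (that is, $ac^i$) occurs exactly at an ascent with $\ell_{j+1}\geq i$, the ``$a$'' being the last letter of run $j$. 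In either case the dashed letter ``$2$'' must be a letter strictly between $v_j$ and $v_{j+1}$ lying \emph{after} this factor; because run $j+1$ is constant, this amounts to the existence of some run of index $m\geq j+2$ with $v_j<v_m<v_{j+1}$.

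The key definition would be: call an ascent $j$ (a boundary with $v_j<v_{j+1}$) \emph{active} if there is an $m\geq j+2$ with $v_j<v_m<v_{j+1}$. The crucial observation is that activeness depends only on the value sequence $V=(v_1,\ldots,v_t)$ and not at all on the lengths $\ell_j$. Letting $A=A(V)$ be the set of active ascents, the reformulation above gives the clean characterizations: $\pi$ avoids $\tau$ iff $\ell_j\leq i-1$ for every $j\in A$, whereas $\pi$ avoids $\rho$ iff $\ell_{j+1}\leq i-1$ for every $j\in A$, i.e.\ iff $\ell_{j'}\leq i-1$ for every $j'\in A+1$, where $A+1=\{\,j+1:j\in A\,\}$.

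To finish, I would fix $V$ (hence $t$ and $A$) and count. In both cases one is counting compositions $\ell_1+\cdots+\ell_t=n$ into positive parts in which the parts indexed by a prescribed set $B$ are additionally required to be at most $i-1$: for $\tau$ take $B=A$, for $\rho$ take $B=A+1$. The generating function for such compositions factors as $\bigl(x/(1-x)\bigr)^{t-|B|}\bigl(x(1-x^{i-1})/(1-x)\bigr)^{|B|}$, which depends on $B$ only through $|B|$. Since $j\mapsto j+1$ is injective we have $|A+1|=|A|$, so the two counts coincide for every admissible $V$; summing over all $V$ yields $a_\tau(n,k)=a_\rho(n,k)$ for all $n,k$, and in fact furnishes, for each fixed $V$, a bijection between the two avoidance classes that preserves both $n$ and the run-value sequence.

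I expect the main obstacle to be the careful verification of the occurrence reformulation in the first step, in particular pinning down that the ``$c$'' in $a^ic$ is forced to the run boundary (so that only ascents with $\ell_j\geq i$, resp.\ $\ell_{j+1}\geq i$, are relevant) and that the dashed ``$2$'' is available exactly when some \emph{interior} run-value falls in $(v_j,v_{j+1})$ at index $\geq j+2$ — the constant run $j+1$ must be excluded at this point. Once activeness is recognized to be a function of $V$ alone, decoupled from the $\ell_j$, the remaining composition count is routine. A secondary point worth recording is that this transformation changes run lengths and hence need not preserve the multiset of letters, so — in contrast to Theorems~\ref{th8} and \ref{th9} — only ordinary Wilf-equivalence, rather than strong equivalence, is obtained.
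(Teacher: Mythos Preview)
Your argument is correct and takes a genuinely different route from the paper's. The paper proceeds by induction on $n$: it first shows, via an explicit bijection on weakly increasing words followed by an induction on the position of the first descent, that the number of words in which every factor $a^ic$ with $c>a$ satisfies $c=a+1$, having prescribed first and last letters, is the same for the two relevant notions; it then deduces $a_{1^i3\da2}(n,k;a)=a_{13^i\da2}(n,k;a)$ by a further induction splitting on the position and values of the leftmost ``bad'' factor. Your run-length-encoding argument bypasses all of this. By observing that the set $A(V)$ of active ascents is a function of the value sequence alone, you reduce each side to the count of compositions of $n$ into $t$ positive parts in which the parts indexed by a prescribed subset are bounded above by $i-1$, and this count depends only on the cardinality of that subset --- visibly the same for $A$ and for $A+1$. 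This is shorter, treats general $i$ uniformly (the paper writes out only $i=2$), and actually proves a finer statement: the equivalence respects the entire run-value sequence $V$, hence in particular the first letter, recovering the paper's refinement. Your closing caveat that only ordinary rather than strong Wilf-equivalence follows is accurate and matches the paper, which makes no strong-equivalence claim for this result.
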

\begin{proof}
For simplicity, we prove only the $i=2$ case, the general case following by making the suitable modifications.  By a $113$ or $133$ avoiding $k$-ary word, we will mean, respectively, one which avoids either the $112$ or $122$ subword except for possible adjacencies of the form $aa(a+1)$ or $a(a+1)(a+1)$.  We first show that the number of members of $\mathcal{A}_\tau(n,k)$ starting with $a$ and ending with $b$ where $a$ and $b$ are given is the same for $\tau=113$ as it is for $\tau=133$.  Suppose $\pi \in \mathcal{A}_{113}(n,k)$ starts with the letter $a$ and ends in $b$.  First assume that there are no descents in $\pi$.  Then $\pi$ may be expressed as $\pi=a_1^{n_1}a_2^{n_2}\cdots a_r^{n_r}$ for some $r\geq 1$ where $a=a_1<a_2<\cdots<a_r=b$, $n_i \geq 1$ for $i \in [r]$, and $n_i=1$ if $a_{i+1}>a_i+1$.  Similarly, if $\rho \in \mathcal{A}_{133}(n,k)$ starts with $a$ and ends in $b$ and contains no descents, then we may write $\rho=a_1^{m_1}a_2^{m_2}\cdots a_r^{m_r}$ for some $r$ where $m_{i+1}=1$ if $a_{i+1}>a_i+1$.

We define a bijection $f$ between the words $\pi$ and $\rho$ described above as follows. Let $1 \leq i_1<\cdots<i_{\ell-1}<i_\ell=r$ be the set of indices such that $a_{i_j+1}=a_{i_j}+1$ for $1 \leq j \leq \ell-1$.  Note that the exponent $n_i$ must be one if $i \in [r]-\{i_1,i_2,\ldots,i_\ell\}$, with the $n_{i_j}$ exponents unrestricted.  We define the $m_i$ exponents for indices lying in the interval $[i_j+1,i_{j+1}]$, $0 \leq j \leq \ell-1$, by setting $m_{i_j+1}=n_{i_{j+1}}$ and $m_t=1$ if $i_{j}+1<t\leq i_{j+1}$, where $i_0=0$.  This yields the desired bijection between members of $\mathcal{A}_{113}(n,k)$ and $\mathcal{A}_{133}(n,k)$ whose first and last letters are given and containing no descents.

Now suppose $\pi=\pi_1\pi_2\cdots\pi_n \in \mathcal{A}_{113}(n,k)$ contains at least one descent.  For this case, we will proceed by induction on $n$.  We may assume $n \geq 4$ since the result is clear if $1 \leq n \leq 3$.  Then $a=\pi_1\leq \pi_2\leq \cdots \leq\pi_{i'-1}>\pi_{i'}$ for some $i'>1$.  Likewise, suppose $\rho=\rho_1\rho_2\cdots \rho_n \in \mathcal{A}_{133}(n,k)$ starts with $a$ and contains at least one descent, with $i'-1$ the position of the leftmost descent.  By the bijection in the previous paragraph, we have that the number of possible subsequences $\pi_1\pi_2\cdots\pi_{i'-1}$ in which $\pi_{i'-1}$ is a given number $s$ is the same as the number of possible subsequences $\rho_1\rho_2\cdots\rho_{i'-1}$ in which $\rho_{i'-1}=s$.  If $\pi_{i'}=\rho_{i'}=t<s$ is given, then there are the same number of possibilities for $\pi_{i'}\pi_{i'+1}\cdots \pi_n$ as there are for $\rho_{i'}\rho_{i'+1}\cdots \rho_n$ by induction since both classes of sequences are to begin with $t$ and end with $b$, with the former avoiding $113$ and the latter $133$.  Letting the position of the first descent vary as well as the values of $s$ and $t$, it follows that the number of members of $\mathcal{A}_\tau(n,k)$ whose first and last letters are prescribed is the same for $\tau=113$ as it is for $\tau=133$.

We now show that $a_{113\da2}(n,k;a)=a_{133\da2}(n,k;a)$ for each $a \in [k]$ by induction.  Clearly the members of either set not containing an occurrence of $113$ or $133$, respectively, are equinumerous by the preceding.  So assume $\pi=\pi_1\cdots\pi_n \in \mathcal{A}_{113\da2}(n,k;a)$ contains at least one occurrence of the $113$ subword and that $\ell$ is the smallest index $i$ such that $\pi_{i-1}\pi_i\pi_{i+1}$ is an occurrence of $113$.  Let $\pi_\ell=u$ and $\pi_{\ell+1}=v$, where $v>u+1$.  Similarly, define $\ell$, $u$, and $v$ in conjunction with $\rho=\rho_1\cdots \rho_n \in \mathcal{A}_{133\da2}(n,k;a)$ and the subword $133$.  We will show that the respective subsets of $\mathcal{A}_{113\da2}(n,k;a)$ and $\mathcal{A}_{133\da2}(n,k;a)$ containing such $\pi$ and $\rho$ have the same cardinality.

To do so, first note that the number of $113$-avoiding words $\alpha$ of length $\ell-1$ starting with $a$ and ending with $u$ equals the number of comparable $133$-avoiding words $\beta$, by the preceding.  Furthermore, for any pair $\alpha$ and $\beta$ as described, we have
$$a_{113\da2}(n,k;\alpha u v)=a_{113\da2}(n-\ell,k-v+u+1;u+1)=a_{133\da2}(n-\ell,k-v+u+1;u+1)=a_{133\da2}(n,k;\beta vv),$$
by induction.  Allowing the prefixes $\alpha u v$ and $\beta v v$ to vary over all possible lengths and values of $u$ and $v$, it follows that $a_{113\da2}(n,k;a)=a_{133\da2}(n,k;a)$ for all $a$, as desired.
\end{proof}

\begin{remark}\label{rem31}
A similar proof may be given for the equivalences $1^i2\da1 \sim 12^i\da1$ for $i \geq 2$.
\end{remark}

We next show the equivalence of the patterns $132\da1$ and $132\da2$.

\begin{theorem}\label{th1}
We have $132\da1\sim132\da2$.
\end{theorem}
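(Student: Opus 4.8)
The plan is to prove the (formally stronger) statement that the equivalence respects the first letter, i.e.\ that $a_{132\da1}(n,k;a)=a_{132\da2}(n,k;a)$ for every $a\in[k]$, by induction on $n$; the cases $n\le 3$ are trivial since no copy of a length-four pattern can occur. Write $\tau=132\da1$ and $\rho=132\da2$. Both patterns share the subword $132$ before the dash, so the two classes contain \emph{exactly the same} words having no occurrence of the subword $132$, and these contribute equally to each side. It remains to match the words that do contain at least one occurrence of the subword $132$.

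For such a word $\pi$, I would take $\ell$ to be the index of its leftmost occurrence of the subword $132$, so that $\pi_\ell=x$, $\pi_{\ell+1}=z$, $\pi_{\ell+2}=y$ with $x<y<z$, and the prefix $P=\pi_1\cdots\pi_{\ell+1}$ avoids the subword $132$. Since $P$ is pinned down purely by this common subword condition, it ranges over the same set of prefixes for $\tau$ as for $\rho$, so I would fix $P$ (hence $x,z$) together with the middle value $y\in(x,z)$ and count the completions on each side. The point to check is that, because the occurrence ends in its middle letter $y$ while $\pi_{\ell+1}=z>y$, the \emph{only} triple straddling the cut $P\mid \pi_{\ell+2}\cdots\pi_n$ that can form a $132$ is the occurrence at $\ell$ itself; hence the suffix $\pi_{\ell+2}\cdots\pi_n$ (length $m:=n-\ell-1$, beginning with $y$) is constrained only by avoiding the pattern internally and by the prohibition coming from the occurrence at $\ell$. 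For $\tau$ this forbids any later copy of the smallest letter $x$, so the $\tau$-suffix is an arbitrary $\tau$-avoiding word over $[k]\setminus\{x\}$ starting with $y$; standardizing gives $a_\tau(m,k-1;y-1)$, since $x<y$ forces the rank of $y$ in $[k]\setminus\{x\}$ to equal $y-1$.

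The main obstacle is the $\rho$-side, where the prohibition forbids later copies of the \emph{middle} letter $y$, which is precisely the letter kept at the front of the suffix. Thus the $\rho$-suffix is a $\rho$-avoiding word of length $m$ beginning with $y$ in which $y$ never recurs; call the number of these $R(m,k;y)$. To finish I would establish the bijection $R(m,k;y)=a_\rho(m,k-1;y-1)$ by decreasing the initial letter from $y$ to $y-1$ and standardizing the remaining alphabet $[k]\setminus\{y\}$ onto $[k-1]$. This is a bijection because no letter other than the first equals $y$, so a leading triple $y\,w_1w_2$ is a $132$ exactly when $(y-1)\,w_1w_2$ is (the requirements $y<w_2$ and $y-1<w_2$ agree once $w_2\ne y$), while the initial letter only ever plays the role of the ``$1$'' in an occurrence it begins, so lowering it changes neither the middle value being matched nor any other occurrence.

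Combining the two counts, the completions of a fixed $(P,y)$ number $a_\tau(m,k-1;y-1)$ on the $\tau$-side and $a_\rho(m,k-1;y-1)$ on the $\rho$-side, and these agree by the induction hypothesis applied at $m<n$. Summing over all admissible prefixes $P$ and all $y\in(x,z)$, then adding the common count of subword-$132$-avoiders, gives $a_\tau(n,k;a)=a_\rho(n,k;a)$ and closes the induction. I expect the two delicate points to be the verification that no straddling triple other than the occurrence at $\ell$ matters (so the suffix decouples cleanly) and the front-letter bijection reconciling the fact that $\tau$ excludes the smallest letter of the occurrence whereas $\rho$ excludes its middle letter.
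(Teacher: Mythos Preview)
Your argument is correct, and it follows the same overall strategy as the paper's proof: induct on $n$, refine by the first letter, and decompose a word according to the position $\ell$ of its leftmost $132$ subword so that the prefix $\pi_1\cdots\pi_{\ell+1}$ is $132$-subword-free and is therefore shared by both sides. The verification that the only $132$ subword straddling the cut is the one at $\ell$ (since $\pi_{\ell+1}\pi_{\ell+2}\pi_{\ell+3}$ cannot be a $132$, as $\pi_{\ell+1}>\pi_{\ell+2}$) is exactly the observation the paper uses implicitly when it deletes the initial letters and invokes induction.

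Where you diverge from the paper is in how you handle the $\rho$-side. The paper does \emph{not} prove the identity $R(m,k;y)=a_\rho(m,k-1;y-1)$ directly; instead it carries along an auxiliary statement (ii), namely $a_\tau(n,k;xyz)=a_\rho(n,k;xyz)$ for $x<z<y$, and proves (i) and (ii) simultaneously via a case analysis on the fourth letter of $\pi$ (and on where the next non-ascent occurs). Your bijection---lower the initial $y$ to $y-1$ and standardize $[k]\setminus\{y\}$ onto $[k-1]$---dispatches this in one stroke: since no later letter equals $y$, the condition $y<w$ is equivalent to $y-1<w$ for every subsequent letter $w$, so the $132$-ness of the leading triple is preserved, and the first position can only serve as the ``$1$'' of an occurrence, so the middle value (the letter that must recur for a $\rho$-occurrence) is never altered. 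This replaces the paper's double induction and its several subcases with a single explicit map. What you gain is brevity and transparency; what the paper's more granular case analysis buys is that the same template adapts with minimal change to the companion equivalences $213\da1\sim213\da2$ and $122\da1\sim122\da2$ mentioned in its Remark, where the analogous one-line bijection is less immediate.
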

\begin{proof}
We will simultaneously show by induction on $n$ for all $k$ the following:\\

\noindent(i) $a_{132\da1}(n,k;a)=a_{132\da2}(n,k;a)$ for all $a$, and\\

\noindent (ii) $a_{132\da1}(n,k;xyz)=a_{132\da2}(n,k;xyz)$ for any $x<z<y$.\\

We may assume $n>4$, since both statements clearly hold if $n \leq 4$.  We first show (i).  To do so, first note that both $\mathcal{A}_{132\da1}(n,k;a)$ and $\mathcal{A}_{132\da2}(n,k;a)$ contain as a subset all of the $132$-avoiding members of $[k]^n$ starting with the letter $a$, so let us assume all words under consideration contain at least one occurrence of $132$.  Let $\pi=\pi_1\cdots\pi_n\in \mathcal{A}_{132\da1}(n,k;a)$ denote a $k$-ary word and $i$ be the smallest index such that $\pi_i\pi_{i+1}\pi_{i+2}$ is a $132$ subword.  If $i>1$, then let $\rho=\pi_1\cdots \pi_{i-1}$ and $u$, $v$, and $w$ be the letters comprising the first occurrence of $132$.  Then we have by induction that
$$a_{132\da1}(n,k;\rho uvw)=a_{132\da1}(n-i+1,k;uvw)=a_{132\da2}(n-i+1,k;uvw)=a_{132\da2}(n,k;\rho uvw)$$
in this case.  So assume $i=1$.  Note that this case in showing (i) is equivalent to showing (ii).

To do so, suppose $\pi=\pi_1\pi_2\cdots \pi_n$ is either a member of  $\mathcal{A}_{132\da1}(n,k;xyz)$ or $\mathcal{A}_{132\da2}(n,k;xyz)$, where $x<z<y$. Let $r$ denote the fourth letter of $\pi$.  We consider cases on $r$.  If $r\leq z$, then there are $z-1$ possibilities concerning $r$ for members of either set.  Thus if $1 \leq r < x$, we have
$$a_{132\da1}(n,k;xyzr)=a_{132\da1}(n-3,k-1;r)=a_{132\da2}(n-3,k-1;r)=a_{132\da2}(n,k;xyzr),$$
while if $x < r \leq z$, we have
$$a_{132\da1}(n,k;xyzr)=a_{132\da2}(n-3,k-1;r-1)=a_{132\da2}(n-3,k-1;r-1)=a_{132\da2}(n,k;xyz(r-1)).$$

If $r>z$, then let $j$ be the smallest index greater than three such that $\pi_{j+1} \leq \pi_j$.  Note that if no such index exists, then $z=\pi_3<\pi_4<\cdots < \pi_n$ and there are clearly an equal number of options concerning membership in either set.  Otherwise, we have $z=\pi_3<\pi_4<\cdots<\pi_j \geq \pi_{j+1}$ for some $j \geq 4$.  Let $\alpha=\pi_4\pi_5\cdots \pi_{j+1}$.  If $s=\pi_{j+1}=\pi_j$, then
$$a_{132\da1}(n,k;xyz\alpha)=a_{132\da1}(n-j,k-1;s-1)=a_{132\da2}(n-j,k-1;s-1)=a_{132\da2}(n,k;xyz\alpha).$$
Next suppose $s=\pi_{j+1}<\pi_j$ but that $\pi_{j-1}\pi_j\pi_{j+1}$ does not form an occurrence of $132$.  Then $a_{132\da1}(n,k;xyz\alpha)=a_{132\da2}(n,k;xyz\alpha)$ in this case, by the preceding arguments, upon considering the further subcases $1 \leq s <x$, $x<s \leq z$, or $z <s \leq \pi_{j-1}$.

Finally, suppose $\pi_{j+1}<\pi_j$ and that $\pi_{j-1}\pi_j\pi_{j+1}=uvw$ is an occurrence of $132$.  For this case, we consider an equivalent description of (ii) above.  Note that (ii) holds if and only if $a_{132\da1}(n-2,k-1;z-1)$ equals the number of $132\da2$ avoiding $k$-ary words of length $n-2$ starting with $z$ in which the only occurrence of $z$ is the first letter.  Deleting the first $j$ letters of $\pi \in \mathcal{A}_{132\da1}(n,k;xyz\alpha)$ implies in this case that $a_{132\da1}(n,k;xyz\alpha)=a_{132\da1}(n-j,k-2;w-2)$.  On the other hand, deleting the first $j$ letters of $\pi \in \mathcal{A}_{132\da2}(n,k;xyz\alpha)$ implies that $a_{132\da2}(n,k;xyz\alpha)$ equals the number of members of $[k-1]^{n-j}$ starting with $w-1$ in which the only occurrence of $w-1$ is the first letter.  By the reformulation of (ii) just described and induction, we have $a_{132\da1}(n,k;xyz\alpha)=a_{132\da2}(n,k;xyz\alpha)$ in this case as well.  Combining all of the cases above completes
  the inductive proof of (ii), as desired.
\end{proof}

\begin{remark}\label{rem11}
Comparable proofs to the one above may be given for the equivalences
$$213\da1\sim213\da2\mbox{ and }122\da1\sim 122\da2.$$
\end{remark}

The final result of this subsection features an outwardly dissimilar looking pair of equivalences.

\begin{theorem}\label{th4}
We have
$134\da2 \sim 143\da 2,~124\da3 \sim 214\da3, \mbox{ and } 142\da3 \sim 241\da3.$
\end{theorem}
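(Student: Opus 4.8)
The plan is to isolate the common combinatorial content of the three pairs and then prove each by induction on $n$ and $k$, in the manner of Theorems \ref{th1} and \ref{th3}. In every pair an occurrence is a triple $\pi_i\pi_{i+1}\pi_{i+2}$ of consecutive letters together with a later letter $\pi_q$, $q>i+2$, playing the dashed role. Regarding the dashed value as a splitter, one checks that in all three pairs it must fall in the same open interval between two consecutive values of the triple; this interval breaks the triple into a one-letter block and a two-letter block, and the two patterns of a pair differ \emph{only} in the positional order of the two letters of the two-letter block. For $134\da2$ versus $143\da2$ the two-letter block is the pair of largest letters, at the adjacent positions $i+1,i+2$; for $124\da3$ versus $214\da3$ it is the pair of smallest letters, at $i,i+1$; and for $142\da3$ versus $241\da3$ it is again the pair of smallest letters, but now at the non-adjacent positions $i$ and $i+2$. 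Since the before-dash subwords of a pair reduce to inequivalent consecutive patterns (such as $123$ and $132$), Theorem \ref{th8} does not apply; moreover, in contrast to the proofs of Theorems \ref{th1} and \ref{th3}, the two classes do \emph{not} share a common family of ``subword-avoiding'' words that could be peeled off at the outset, so the induction must be arranged to treat the whole class at once.

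For a representative pair $\tau,\rho$ I would prove $a_\tau(n,k;a)=a_\rho(n,k;a)$ for every $a\in[k]$ by simultaneous induction on $n$ and $k$, together with an auxiliary statement giving $a_\tau(n,k;w)=a_\rho(n,k;\widetilde w)$ for each prefix $w$ isomorphic to the before-dash triple, where $\widetilde w$ is its block-swapped image; this auxiliary statement plays the role of statement (ii) in the proof of Theorem \ref{th1}. Refining $\mathcal{A}_\tau(n,k;a)$ by the first index $i$ at which the before-dash triple of $\tau$ is realized, I would delete the prefix $\pi_1\cdots\pi_{i+2}$: avoidance of $\tau$ forces every later letter out of the splitter interval, so the continuations are exactly the words over the alphabet obtained by deleting that interval which again avoid $\tau$. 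The block-swap reformulation guarantees that the splitter interval, and hence the number of forbidden continuation values and the reduced alphabet size, is the same on the $\tau$-side and on the $\rho$-side, so after standardizing the remaining letters the two recursions agree term by term, reducing both the main and the auxiliary statement to shorter instances.

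The step that must be handled with care is the reconciliation of the two recursions for the words in which the first critical triple is delayed, since here the differing shapes of the before-dash subwords are felt. For each pattern I would refine by the structural landmark appropriate to its before-dash shape --- the position of the first non-ascent for the increasing triples of $134\da2$ and $124\da3$, and the first peak or valley for the non-monotonic triples --- in the spirit of the proofs of Theorems \ref{th6} and \ref{th11}. The goal is to show that the resulting refined counts coincide, so that the words whose first critical triple is delayed (including those having no such triple at all) are matched without ever equating two inequivalent subword-avoidance classes directly. Establishing this matching, rather than the prefix deletion itself, is the substance of the argument.

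The main obstacle is the overlap of consecutive triples: positions $i+1$ and $i+2$ of one triple are positions $i$ and $i+1$ of the next, so toggling the order of the two-letter block can create or destroy a triple at the neighbouring indices $i-1,i,i+1$, and one must verify that the block-swap never manufactures an earlier occurrence on either side. This verification is most delicate for $142\da3\sim241\da3$, where the swapped letters occupy the non-adjacent positions $i$ and $i+2$ and the intervening letter $\pi_{i+1}$, the largest of the triple, must be shown to be inert. I expect the three pairs to be handled in parallel once the common reformulation is fixed, with this non-adjacent case demanding the most careful bookkeeping.
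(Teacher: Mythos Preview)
The paper's proof is entirely different from your plan: it builds, for each of the three pairs, an explicit multiset-preserving bijection. For $134\da2\sim143\da2$, one takes $\pi\in\mathcal{A}_{134\da2}(n,k)$ and removes $143\da2$-violations level by level: first all $k$-occurrences (those in which the ``4'' is an actual $k$), then $(k-1)$-occurrences, and so on, at each level swapping the ``4'' with its right neighbour and working left to right until no violation at that level remains. The other two pairs are handled by analogous swap procedures. No induction on $n$ or $k$ and no prefix refinement is used, and strong equivalence comes for free.

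Your inductive plan has a genuine gap exactly where you locate ``the substance of the argument.'' The before-dash triples in each pair are \emph{not} Wilf-equivalent as consecutive patterns on $k$-ary words --- for instance, there are $225$ words in $[4]^4$ avoiding the consecutive pattern $123$ but only $224$ avoiding $132$ --- so the words with no critical triple on the $\tau$-side and on the $\rho$-side are not equinumerous, and your recursion cannot balance term by term at the level ``first index where the triple occurs.'' You recognise this and propose to repair it by a further refinement ``by the structural landmark appropriate to its before-dash shape,'' but you give no pairing of landmarks and no reason why the deficit in one refined class should be exactly offset by a surplus in another; without that mechanism the induction does not close. The overlap issue you flag is a second, independent obstruction: even the auxiliary equality $a_\tau(n,k;abc)=a_\rho(n,k;acb)$ does not reduce cleanly, since after the prefix the third letter is $c$ on one side and $b$ on the other, so the boundary triples $\pi_2\pi_3\pi_4$ and $\pi_3\pi_4\pi_5$ impose different constraints on the continuation. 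The paper's local-swap bijection sidesteps both problems simultaneously.
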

\begin{proof}
By an \emph{$i$-occurrence of} $134\da2$ (or $143\da2$) within some word $w$, we will mean one in which the 4 position corresponds to the actual letter $i$ in $w$.  We first define a mapping $f$ between $\mathcal{A}_{134\da2}(n,k)$ and $\mathcal{A}_{143\da2}(n,k)$ as follows.  Let $\pi \in \mathcal{A}_{134\da2}(n,k)$.  First consider whether or not there are any $k$-occurrences of $143\da2$ within $\pi$.  If there are any, we first remove the leftmost such occurrence of $143\da2$ in $\pi$ by interchanging the letter $k$ within this occurrence with its successor.  Then remove the leftmost $k$-occurrence of $143\da2$ in the resulting word in the same manner and repeat until there are no $k$-occurrences left.  Let $f_1(\pi)$ denote the word so obtained.  Note that $f_1(\pi)$ contains no $k$-occurrences of $143\da2$ and that $f_1(\pi)=\pi$ if $\pi$ avoids such occurrences to start with.  Proceeding inductively, if $j>1$, we remove any $(k-j)$-occurrences of $143\da2$ from $f_j(\pi)$ by interchanging the letters corresponding to the $4$ within these occurrences with their successors, starting with the leftmost and working from left to right.  Note that $f_j(\pi)$ contains no $(k+1-i)$-occurrences of $143\da2$ for any $i \in [j]$.  The process is seen to terminate after $k-3$ steps.  Set $f(\pi)=f_{k-3}(\pi)$.  Note that $f(\pi)$ avoids $143\da2$.

For example, if $n=25$, $k=6$, and
$$\pi=\underline{365}6\underline{264}1\underline{1635}6143254\underline{163}423 \in \mathcal{A}_{134\da2}(25,6),$$
then
$$f_1(\pi)=3566246113566143\underline{254}136423,\quad f_2(\pi)=3566246113566\underline{143}245136423,$$
and
$$f(\pi)=f_3(\pi)=3566246113566134245136423 \in \mathcal{A}_{143\da2}(25,6).$$

We now show that $f$ is a bijection.  First note that each $k$ in $\pi \in \mathcal{A}_{134\da2}(n,k)$ corresponding to a $k$-occurrence of $143\da2$ is translated to the right (several places, if necessary) until it is no longer part of an occurrence, starting with the leftmost such $k$.  Furthermore, each $k$ that must be moved is separated from any other such $k$'s by at least one letter less than $k$.  Since each time a $k$ is moved to the right, the leftmost $k$-occurrence of $143\da2$ is also shifted to the right, the number of steps involved in the process of removing all $k$-occurrences of $143\da2$ from a word is bounded above by the total number of letters, and hence it terminates.  Similar remarks apply to any $i \in [4,k-1]$ that must be translated to the right in the transition from $f_{k-i}(\pi)$ to $f_{k-i+1}(\pi)$.  Finally, any $i$ that is moved in the $(k-i+1)$-st step remains undisturbed in possible later steps when only letters strictly less than $i$ may be moved.

So in order to define the inverse of $f$, we can start with the rightmost $4$-occurrence of $134\da2$ in $f(\pi)$, if there is one, and move the $4$ contained within it to the left by interchanging its position with its predecessor's until it is no longer part of a $4$-occurrence of the pattern.  Repeat for each subsequent $4$ that is part of a $4$-occurrence of $134\da2$, going from right to left.  Then repeat this process for each $i>4$, ending with $i=k$.  It may be verified that the mapping from $\mathcal{A}_{143\da2}(n,k)$ to $\mathcal{A}_{134\da2}(n,k)$ that results from performing this procedure is the inverse of $f$.

A similar bijection may be given for the equivalence $124\da3\sim214\da3$.  By a \emph{$j$-occurrence of} $124\da3$ (or $214\da3$), we mean one in which the $1$ position corresponds to the actual letter $j$.  Suppose $\pi \in \mathcal{A}_{124\da3}(n,k)$.  Consider any $1$-occurrences of $214\da 3$ in $\pi$ and switch them to $124\da3$ by interchanging the letters corresponding to the $1$ and $2$ positions, starting with the rightmost $1$-occurrence of $214\da3$ and working from right to left.  Subsequently replace any $j$-occurrences of $214\da3$ in $\pi$ with $124\da3$ for $j>1$, ending with $j=k-3$.  The resulting word is seen to belong to $\mathcal{A}_{214\da3}(n,k)$.  The mapping is reversed by first replacing any $(k-3)$-occurrences of $124\da3$ with $214\da3$, starting with the leftmost and working from left to right, and then repeating this for any smaller $j$-occurrences of $124\da3$, ending with $j=1$.

A bijection comparable to the first one above applies to the equivalence $142\da3 \sim 241 \da 3$.  By an $i$-occurrence of either pattern in a $k$-ary word, we mean one in which the $2$ corresponds to the letter $i$, where $2 \leq i \leq k-2$.  Begin by removing any $(k-2)$-occurrences of $241\da3$ within $\pi \in \mathcal{A}_{142\da3}(n,k)$ by interchanging the positions of the two smallest letters within these occurrences, starting with the leftmost and working from left to right.  Subsequently remove any $i$-occurrences of $241\da3$ for $i<k-2$ from the word that results in a similar manner, ending with $i=2$.  The mapping so obtained is seen to be invertible.
\end{proof}

Note that the bijections used in the proof of the prior theorem preserve the number of occurrences of each letter and hence show the strong equivalence.

\subsection{Table of equivalence classes for $(3,1)$ and $(2,2)$ patterns}

Combining the previous results yields a
complete solution to the problem of identifying all of the Wilf-equivalence classes
for patterns of type $(3,1)$ and $(2,2)$. By symmetry, this identifies all of the Wilf-equivalence classes for vincular patterns of length four containing a single dash. Below are the lists of the non-trivial $(3,1)$ and $(2,2)$ equivalences.  Others not listed hold due to symmetry.  The Wilf-classes of size one, which may be distinguished from all others by numerical evidence, are also not included.

\begin{itemize}
\item $112\da1\simref{\ref{rem31}}122\da1\simref{\ref{rem11}}122\da2$\\[-4pt]
\item $112\da3\simref{\ref{th8}}122\da3\simref{\ref{th8}}211\da3\simref{\ref{th8}}221\da3$\\[-4pt]
\item $113\da2 \simref{\ref{th3}}133\da2$\\[-4pt]
\item $131\da2 \simref{\ref{th10}}121\da3 \simref{\ref{th8}}212\da3$\\[-4pt]
 \item $123\da1\simref{\mbox{\scriptsize\cite{Ka}}}123\da3\simref{\ref{th6}}123\da2$\\[-4pt]
\item $123\da4\simref{\ref{th8}}321\da4$\\[-4pt]
\item $124\da3\simref{\mbox{\scriptsize\cite{Ka}}}134\da2\simref{\ref{th4}}143\da2\simref{\ref{th4}}214\da3$\\[-4pt]
\item $132\da1\simref{\ref{th1}}132\da2$\\[-4pt]
\item $213\da4\simref{\ref{th8}}231\da4\simref{\ref{th8}}312\da4\simref{\ref{th8}}132\da4\simref{\ref{th10}}142\da3\simref{\ref{th4}}241\da3$\\[-4pt]
\item $213\da1\simref{\ref{rem11}}213\da2.$\\[-4pt]
\end{itemize}

We remark that Theorem \ref{th6} also applies to the equivalence $123\da1\sim123\da3$ found in \cite{Ka}, and that the equivalence $124\da3 \sim 134\da2$ is a special case of Theorem \ref{th11}.

The $(2,2)$ equivalence classes are given as follows.

\begin{itemize}
\item $11\da12\simref{\ref{th9}}11\da21$\\[-4pt]
\item $11\da23\simref{\ref{th8}}11\da32$\\[-4pt]
\item $12\da13 \simref{\mbox{\scriptsize\cite{Ka}}} 13\da12$\\[-4pt]
\item $12\da34\simref{\ref{th8}}12\da43\simref{\ref{th8}}21\da43\simref{\ref{th8}}21\da34$\\[-4pt]
\item $12\da32 \simref{\ref{th9}} 12\da23 \simref{\mbox{\scriptsize\cite{Ka}}} 21\da32\simref{\ref{th9}}21\da23$\\[-4pt]
\item $13\da24\simref{\mbox{\scriptsize\cite{Ka}}}24\da13$\\[-4pt]
\item $14\da23\simref{\mbox{\scriptsize\cite{Ka}}}23\da14.$\\[-4pt]
\end{itemize}

Note that the equivalence $12\da23\sim21\da32$ also follows from taking reverse complements in Theorem \ref{th9}.

\section{Enumerative results}

In this section, we find explicit formulas for and/or recurrences satisfied by the generating functions of the sequences $a_\tau(n,k)$ for various $\tau$ where $k$ is fixed.  Let us define the following generating functions in conjunction with a pattern $\tau$:
\begin{align*}
W_{\tau}(x;k)&=\sum_{n\geq0}a_\tau(n,k)x^n,\\
W_{\tau}(x;k|j_1j_2\cdots j_m)&=\sum_{n\geq0}a_\tau(n,k;j_1j_2\cdots j_m)x^n.
\end{align*}

We will make use of the convention that empty sums take the value one and empty products the value zero.  Our first result provides a way of calculating generating functions for vincular patterns from those for subword patterns.

\begin{theorem}\label{then1}
Let $\tau$ be a subword pattern whose largest letter is $r-1$. Define $\tau'=\tau\da r$. Then for all $k\geq r-1$,
$$W_{\tau'}(x;k)=\frac{1}{(1-(r-1)x)\prod_{j=r-1}^{k-1}\left(1-xW_{\tau}(x;j)\right)}.$$
\end{theorem}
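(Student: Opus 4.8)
The plan is to prove the identity by induction on $k$, which reduces the problem to establishing the single-step recurrence
\begin{equation}\label{eq-rec}
W_{\tau'}(x;k)=\frac{W_{\tau'}(x;k-1)}{1-xW_{\tau}(x;k-1)},\qquad k\geq r,
\end{equation}
together with the base case $k=r-1$. For the base case, I would note that no word over $[r-1]$ can contain $\tau'$: a consecutive factor isomorphic to $\tau$ already uses $r-1$ distinct values (the largest letter of $\tau$ being $r-1$), which over $[r-1]$ must be exactly $\{1,\ldots,r-1\}$, so no later letter can be strictly larger and play the role of $r$. Hence $a_{\tau'}(n,r-1)=(r-1)^n$ and $W_{\tau'}(x;r-1)=1/(1-(r-1)x)$, which agrees with the asserted formula since the product over the empty range $r-1\le j\le r-2$ equals $1$. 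Granting \eqref{eq-rec}, the general formula then follows by telescoping.

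The core of the argument is a decomposition of $\mathcal{A}_{\tau'}(n,k)$ (for $k\geq r$) according to the largest letter $k$. Words containing no $k$ are precisely the $\tau'$-avoiding words over $[k-1]$ and contribute $W_{\tau'}(x;k-1)$. A word $w$ that does contain $k$ I would write uniquely as $w=u\,k\,v$, where $u\in[k-1]^{*}$ is the prefix before the first $k$ and $v\in[k]^{*}$ is the suffix after it. The claim to verify is that
$$\text{$w$ avoids $\tau'$}\iff\text{$u$ avoids the subword $\tau$ and $v$ avoids $\tau'$.}$$
Granting this, splitting at the first $k$ sets up a length-preserving bijection between such $w$ and triples consisting of a member of $\mathcal{A}_{\tau}(\cdot,k-1)$, the single letter $k$, and a member of $\mathcal{A}_{\tau'}(\cdot,k)$; since $x^{|w|}=x^{|u|}\cdot x\cdot x^{|v|}$, these words contribute $xW_{\tau}(x;k-1)W_{\tau'}(x;k)$. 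Adding the two cases gives $W_{\tau'}(x;k)=W_{\tau'}(x;k-1)+xW_{\tau}(x;k-1)W_{\tau'}(x;k)$, which rearranges to \eqref{eq-rec}.

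I expect the main obstacle to be the verification of the displayed equivalence, which rests on a case analysis on the position of a consecutive factor $F$ isomorphic to $\tau$ relative to the distinguished first $k$. Necessity is the easier direction: a factor $F$ inside $u$ has all its letters in $[k-1]$, so the immediately following $k$ is strictly larger than every letter of $F$ and completes an occurrence of $\tau'=\tau\da r$; and any occurrence of $\tau'$ lying inside the factor $v$ of $w$ is already an occurrence in $w$. For sufficiency, I would take an occurrence of $\tau'$ in $w$, which consists of such a factor $F$ together with a strictly larger later letter, and examine where $F$ sits: if $F\subseteq u$ it is excluded by $u$ avoiding $\tau$; if $F\subseteq v$ then the larger letter also lies in $v$, so the occurrence is confined to $v$ and is excluded by $v$ avoiding $\tau'$; and if $F$ contains the first $k$, then $k$, being the global maximum of $w$, is the largest letter of $F$, so no strictly larger letter can follow and $F$ cannot be extended to a copy of $\tau'$. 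This last case is the crux, and it is exactly where the hypothesis that $r$ strictly exceeds every letter of $\tau$ is used. Translating the resulting set identities into generating functions and inducting on $k$ then completes the proof.
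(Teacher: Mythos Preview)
Your proposal is correct and follows exactly the same strategy as the paper: split at the first occurrence of the top letter $k$ to obtain $W_{\tau'}(x;k)=W_{\tau'}(x;k-1)+xW_{\tau}(x;k-1)W_{\tau'}(x;k)$, solve for $W_{\tau'}(x;k)$, and iterate down to the base case $W_{\tau'}(x;r-1)=1/(1-(r-1)x)$. The paper's proof is terser---it simply asserts the decomposition $\pi=\pi'k\pi''$ with $\pi'$ a $(k-1)$-ary word and the resulting recurrence---whereas you additionally spell out the equivalence ``$w$ avoids $\tau'$ iff $u$ avoids $\tau$ and $v$ avoids $\tau'$'' via the three-case analysis on where the $\tau$-factor sits, which is a welcome elaboration but not a different idea.
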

\begin{proof}
Let $k\geq r$. Since each $k$-ary word $\pi$ either contains no $k$'s or can be expressed as $\pi=\pi' k \pi''$, where $\pi'$ is a $(k-1)$-ary word, we obtain
$$W_{\tau'}(x;k)=W_{\tau'}(x;k-1)+xW_{\tau}(x;k-1)W_{\tau'}(x;k),$$
which implies
$$W_{\tau'}(x;k)=\frac{W_{\tau'}(x;k-1)}{1-xW_{\tau}(x;k-1)}$$
for all $k\geq r$. Iterating this last relation, we get
$$W_{\tau'}(x;k)=\frac{1}{(1-(r-1)x)\prod_{j=r-1}^{k-1}\left(1-xW_{\tau}(x;j)\right)},$$
since $W_{\tau'}(x;r-1)=\frac{1}{1-(r-1)x}$.
\end{proof}

\begin{example}
By Theorem \ref{then1} and \cite[Section 3]{BM}, we obtain
\begin{align*}
W_{111\da2}(x;k)&=\frac{1}{(1-x)\prod_{j=0}^{k-2}\left(1-x\frac{1+x+x^2}{1-jx(1+x)}\right)},\quad k\geq 1,\\
W_{112\da3}(x;k)&=\frac{1}{(1-2x)\prod_{j=2}^{k-1}\left(1-\frac{x}{1-\frac{1}{x}+\frac{1}{x}(1-x^2)^j}\right)},\quad k\geq2,\\
W_{212\da3}(x;k)&=\frac{1}{(1-2x)\prod_{j=2}^{k-1}\left(1-\frac{x}{1-x-x\sum_{i=0}^{j}\frac{1}{1-ix^2}}\right)},\quad k\geq2,\\
W_{123\da4}(x;k)&=\frac{1}{(1-3x)\prod_{j=3}^{k-1}\left(1-\frac{x}{1-jx-\sum_{i=3}^j\frac{((-1)^{\lfloor(i-3)/3\rfloor}+(-1)^{\lfloor(i-2)/3\rfloor})(-x)^i}{2}\binom{j}{i}}\right)},\quad k\geq3,\\
W_{213\da4}(x;k)&=\frac{1}{(1-3x)\prod_{j=3}^{k-1}\left(1-\frac{x}{1-x-x\sum_{i=0}^{j-2}\prod_{\ell=0}^i(1-\ell x^2)}\right)},\quad k\geq3.\\
\end{align*}
\end{example}

The next four theorems, taken together with the prior example, provide a complete solution to the problem of determining the generating function $W_\tau(x;k)$ in the case when $\tau$ is a pattern of the form $\tau=11a\da b$.  We apply a modification of the scanning-elements algorithm described in \cite{FM} (see also related work in \cite{NaZ,NoZ}).

\begin{theorem}
For $k \geq 1$,
\begin{align*}
W_{111\da1}(x;k)&=\sum_{j=1}^k\frac{(k-j)!\binom{k}{j}(1+x(1+x)+\delta_{j,1} x^3)x^{3(k-j)}}{\prod_{i=j}^k(1-(i-1)x(1+x))}.
\end{align*}
\end{theorem}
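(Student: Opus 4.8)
The plan is to give a direct structural decomposition of the $111\da1$-avoiding words, which is the combinatorial content underlying the scanning-elements procedure of \cite{FM}. The first step is to record the defining structure: a word $\pi\in[k]^n$ avoids $111\da1$ if and only if every maximal run of $\pi$ has length at most three and, whenever a letter $v$ occurs in a run of length exactly three, $v$ does not occur anywhere to the right of that run. Indeed, three equal consecutive letters can only arise from a maximal run of length at least three, so a run of length at least four already forces an occurrence, while a length-three run of $v$ followed by a later $v$ is exactly an occurrence of $111\da1$; conversely these two conditions rule out all occurrences. In particular each letter has at most one run of length three, and such a run is necessarily its final appearance.

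Next I would decompose an avoider according to its length-three runs, which I call its \emph{triples}. Let $\ell$ be the number of distinct letters occurring in a triple and let $v_1,\dots,v_\ell$ list them in the order in which their triples appear. The $\ell$ triples cut $\pi$ into $\ell+1$ regions $P_0,P_1,\dots,P_\ell$, where $P_r$ is the factor lying between the triples of $v_r$ and $v_{r+1}$ (with $P_0$ before the first triple and $P_\ell$ after the last). By the characterization, $v_1,\dots,v_r$ cannot reappear in $P_r$, whereas $v_{r+1},\dots,v_\ell$ and every non-triple letter may still appear there, only in runs of length at most two (no triple lies inside a region). Thus $P_r$ is a word over the alphabet $[k]\setminus\{v_1,\dots,v_r\}$ of size $k-r$, all of whose maximal runs have length at most two. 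The sole boundary constraint is that for $r<\ell$ the region $P_r$ must not end in $v_{r+1}$, for otherwise that run would merge with the following triple into a run of length at least four; no constraint arises at the left of a region (since $v_r$ is excluded from $P_r$), and $P_\ell$ is unconstrained. Concatenating $P_0,v_1^3,P_1,\dots,v_\ell^3,P_\ell$ reconstructs $\pi$, and one checks this is a bijection between avoiders and such labelled data.

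I would then compute the two generating functions needed, writing $a=x(1+x)$ for the contribution $x+x^2$ of a run of length one or two. Decomposing a word over $q$ letters with all runs of length at most two into its maximal runs gives $\frac{1+a}{1-(q-1)a}$, and the subclass not ending in a prescribed one of the $q$ letters has generating function $\frac{1}{1-(q-1)a}$ (the total splits as $N(1+a)$, where $N$ is the ``not ending in $v$'' series). Hence, for a fixed ordered choice of triple letters $v_1,\dots,v_\ell$, the $\ell$ triples contribute $x^{3\ell}$, the regions $P_0,\dots,P_{\ell-1}$ (alphabets of sizes $k,\dots,k-\ell+1$, each forbidden a final letter) contribute $\prod_{r=0}^{\ell-1}\frac{1}{1-(k-r-1)a}$, and $P_\ell$ contributes $\frac{1+a}{1-(k-\ell-1)a}$, for a total of $\frac{(1+a)\,x^{3\ell}}{\prod_{i=k-\ell}^{k}(1-(i-1)a)}$. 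Summing over the $\ell!\binom{k}{\ell}$ ordered choices of triple letters and reindexing by $j=k-\ell$ turns these $(1+a)$-numerators into exactly the stated sum over $1\le j\le k$, since $\ell!\binom{k}{\ell}=(k-j)!\binom{k}{j}$ and $1+a=1+x(1+x)$.

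The one genuinely delicate point, which I expect to be the main obstacle, is the extremal case $\ell=k$ in which every letter is tripled. Here $P_k$ is empty and $P_{k-1}$, being over a single available letter yet forbidden to end in it, is also forced empty; the same computation (with the $i=0$ factor $1+a$ cancelling) shows the $\ell=k$ term equals $\frac{k!\,x^{3k}}{\prod_{i=1}^{k}(1-(i-1)a)}$. This term has no matching $j$ in the range $1\le j\le k$, and the role of the correction $\delta_{j,1}x^3$ is precisely to absorb it into the $j=1$ summand: the extra $\delta_{j,1}x^3$ reproduces exactly $\frac{k!\,x^{3k}}{\prod_{i=1}^{k}(1-(i-1)a)}$. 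Establishing this folding, together with the careful bookkeeping of the ``does not end in $v_{r+1}$'' boundary conditions, is the crux of the argument. As an independent check one can decompose an avoider on the runs of its largest letter $k$ (runs of length one or two, with an optional final triple, and non-empty interior blocks over $[k-1]$) to obtain the recurrence
\[
W_{111\da1}(x;k)=\frac{W_{111\da1}(x;k-1)\bigl(1+x+x^2+x^3W_{111\da1}(x;k-1)\bigr)}{1+x+x^2-(x+x^2)W_{111\da1}(x;k-1)},
\]
with $W_{111\da1}(x;0)=1$, and then verify that the proposed closed form satisfies it.
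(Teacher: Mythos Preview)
Your main argument is correct and genuinely different from the paper's. The paper proceeds algebraically via the scanning-elements method: from the refinements $W_{111\da1}(x;k|i)$ and $W_{111\da1}(x;k|ii)$ it extracts the linear recurrence
\[
W_{111\da1}(x;k)=\frac{1+x(1+x)}{1-(k-1)x(1+x)}+\frac{kx^3}{1-(k-1)x(1+x)}\,W_{111\da1}(x;k-1),
\]
and then iterates. Your route is a direct bijective decomposition on the set of length-three runs, and everything you claim there checks out: the characterization of avoiders, the alphabet restrictions and the ``does not end in $v_{r+1}$'' boundary conditions on the regions $P_r$, the generating functions $\frac{1+a}{1-(q-1)a}$ and $\frac{1}{1-(q-1)a}$, and the absorption of the $\ell=k$ term into the $j=1$ summand via $\delta_{j,1}x^3$. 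What your approach buys is an immediate combinatorial explanation of every factor in the closed form; what the paper's buys is a uniform mechanical derivation that also works for the other $11a\da b$ patterns treated afterward.

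There is, however, a genuine error in the ``independent check'' you propose at the end. Decomposing an avoider along the runs of the letter $k$ does \emph{not} make the intervening $[k-1]$-blocks independent $111\da1$-avoiders: a triple of some $v<k$ inside one block forbids $v$ from all later blocks as well, a global constraint that your product $W_{111\da1}(x;k-1)\cdot W_{111\da1}(x;k-1)\cdots$ does not see. Concretely, for $k=2$ your recurrence gives $[x^5]W_{111\da1}(x;2)=25$, whereas the correct value is $24$ (the word $11121$ is counted by your decomposition---the $[1]$-blocks $111$ and $1$ are each avoiders over $\{1\}$---but it contains $111\da1$). If you want a recurrence in this spirit, decompose instead on the \emph{first} triple $vvv$: there are $k$ choices for $v$, the prefix is a word over $[k]$ with all runs of length at most two not ending in $v$ (generating function $\frac{1}{1-(k-1)a}$), and the suffix is an avoider over $[k]\setminus\{v\}$ (generating function $W_{111\da1}(x;k-1)$). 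This yields exactly the paper's linear recurrence above, whose iteration reproduces your closed form.
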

\begin{proof}
From the definitions,
\begin{align*}
W_{111\da1}(x;k)&=1+\sum_{i=1}^k W_{111\da1}(x;k|i),\\
W_{111\da1}(x;k|i)&=xW_{111\da1}(x;k)-xW_{111\da1}(x;k|i)+W_{111\da1}(x;k|ii),\\
W_{111\da1}(x;k|ii)&=x^2W_{111\da1}(x;k)-x^2W_{111\da1}(x;k|i)+x^3W_{111\da1}(x;k-1).
\end{align*}
Solving this system yields the recurrence
\begin{align*}
W_{111\da1}(x;k)&=\frac{1+x(1+x)}{1-(k-1)x(1+x)}+\frac{kx^3}{1-(k-1)x(1+x)}W_{111\da1}(x;k-1), \qquad k \geq 1,
\end{align*}
with $W_{111\da1}(x;0)=1$, which we iterate to complete the proof.
\end{proof}

\begin{theorem}
For $k\geq0$,
$$W_{112\da1}(x;k)=1+\sum_{j=0}^{k-1}\left(\sum_{i=j}^{k-1}\binom{i}{j}(1-x^2)^{i-j}\right)x^{2j+1}W_{112\da1}(x;k-j).$$
\end{theorem}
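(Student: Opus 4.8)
The plan is to adapt the scanning-elements algorithm exactly as in the proof of the preceding theorem for $W_{111\da1}(x;k)$, refining $W_{112\da1}(x;k)$ according to the initial run of the word. Writing $W(k)=W_{112\da1}(x;k)$ and letting $W(k\mid i)$ and $W(k\mid ii)$ denote the generating functions for those avoiders beginning, respectively, with the letter $i$ and with two copies of $i$, the first two relations are forced in the same way as before: a word is empty or has a first letter, giving $W(k)=1+\sum_{i=1}^{k}W(k\mid i)$, and a word beginning with $i$ either has its leading $i$ as an isolated letter (after which any avoider not beginning with $i$ may follow) or begins a longer run, giving $W(k\mid i)=xW(k)-xW(k\mid i)+W(k\mid ii)$, which matches the $W_{111\da1}$ system verbatim.

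The new ingredient is the equation for $W(k\mid ii)$. Here a word has the form $i^{p}\tau$ with $p\ge2$ and $\tau$ empty or not beginning with $i$, and the decisive point is how the initial run is closed. Unlike the pattern $111$, whose avoidance depends only on equalities, the subword $112$ is triggered only by a \emph{larger} letter: if $\tau$ begins with $c<i$ the factor $i^{p}c$ contains no $112$ and the whole leading run is inert, so the word is counted by an avoider beginning with $c$; if $\tau$ begins with $b>i$ then $iib$ is an occurrence of the subword $112$ and the letter $i$ is thereafter forbidden, so the remainder is an avoider over the alphabet $[k]\setminus\{i\}$ (standardising to $[k-1]$) whose first letter exceeds $i$. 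Summing the run lengths via $\sum_{p\ge2}x^{p}=x^{2}/(1-x)$, this yields a relation of the shape
\[
W(k\mid ii)=\frac{x^{2}}{1-x}\Bigl(1+\sum_{c=1}^{i-1}W(k\mid c)+\sum_{b=i}^{k-1}W(k-1\mid b)\Bigr),
\]
where the last sum is taken over the reduced alphabet. (As a sanity check, for $k=1$ the two inner sums are empty and the system collapses to $W(1)=\tfrac{1}{1-x}$, as required.)

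From here I would eliminate $W(k\mid ii)$ between the second and third relations to obtain a recursion for $W(k\mid i)$ in the first-letter index $i$, the inhomogeneous part of which involves $W(k)$ and the reduced quantities $W(k-1\mid b)$. Solving this linear recursion in $i$ and then summing via $W(k)=1+\sum_i W(k\mid i)$ expresses $W(k)$ through $W(k-1)$ and lower data; iterating the alphabet reduction $[k]\to[k-1]\to\cdots$ replaces each $W(k-1\mid b)$ by the analogous data one level down and unwinds into the sum $\sum_{j\ge0}(\cdots)x^{2j+1}W(k-j)$. The combinatorial meaning of the weights is that $j$ counts the letters successively forbidden by $112$-triggers (each contributing a run of length $\ge2$ closed by a larger letter, whence the $x^{2j+1}$ after collecting the leftover geometric factors), while the $i-j$ remaining values are closed by smaller letters and remain inert; the choice of which $j$ of the $i$ available small values are forbidden supplies $\binom{i}{j}$, and clearing the denominators $1/(1-x)$ attached to the inert runs converts each into a factor $(1-x^{2})=(1-x)(1+x)$, producing $(1-x^{2})^{i-j}$.

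The main obstacle is precisely that the relative-order condition blocks a one-step reduction: because only a strictly larger following letter completes the subword, a single pass does not collapse $k$ to $k-1$, and one must track the nested cascade of forbidden letters together with the standardisation of the shrinking alphabet at each stage. Keeping the bookkeeping of inert (smaller-closed) runs separate from banning (larger-closed) runs through this cascade, and verifying that the accumulated geometric and binomial factors assemble into $\sum_{i=j}^{k-1}\binom{i}{j}(1-x^{2})^{i-j}$, is where essentially all the work lies; the base cases and the telescoping of the partial sums $\sum_{c<i}W(k\mid c)$ are then routine.
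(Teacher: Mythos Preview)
Your three starting relations are exactly the ones the paper derives (after moving the $xW(k\mid ii)$ term to the left, your displayed equation for $W(k\mid ii)$ is the paper's). Eliminating $W(k\mid ii)$ and simplifying with $W(k)=1+\sum_\ell W(k\mid\ell)$ gives the clean form
\[
W(k\mid i)=xW(k)-x^{2}\sum_{j=i+1}^{k}W(k\mid j)+x^{2}\sum_{j=i}^{k-1}W(k-1\mid j),
\]
so your setup is correct and the overall strategy is the paper's.

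Where you diverge is in the execution of the ``cascade''. You propose to unwind the alphabet reduction level by level and track inert versus banning runs combinatorially; the paper sidesteps all of that bookkeeping with a single algebraic device. Differencing the relation above in $i$ gives
\[
W(k\mid k-i)=(1-x^{2})\,W(k\mid k-i+1)+x^{2}\,W(k-1\mid k-i),
\]
and the key observation is that this recurrence, read in the shifted index $k-i$, is \emph{uniform in $k$}: positing $W(k\mid k-i)=\sum_{j}a_{i,j}W(k-j)$ with $a_{i,j}$ independent of $k$ is consistent and forces
\[
a_{i,j}=(1-x^{2})a_{i-1,j}+x^{2}a_{i-1,j-1},\qquad a_{0,0}=x,
\]
which is just the binomial recursion, so $a_{i,j}=\binom{i}{j}(1-x^{2})^{i-j}x^{2j+1}$ immediately. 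Summing over $i$ then gives the theorem in one line.

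Your combinatorial reading of the weights is not quite right: the factor $(1-x^{2})^{i-j}$ does not arise from ``clearing $1/(1-x)$ denominators attached to inert runs'' (that manipulation would produce the wrong power of $x$), and $i$ here indexes the first letter $k-i$, not a count of runs. The honest source of $\binom{i}{j}(1-x^{2})^{i-j}x^{2j}$ is the Pascal-type recurrence above. So your outline would work, but the step you flag as ``where essentially all the work lies'' is precisely what the ansatz $W(k\mid k-i)=\sum_j a_{i,j}W(k-j)$ dissolves; I would recommend adopting it rather than attempting the layer-by-layer unwinding.
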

\begin{proof}
By the definitions,
\begin{align*}
W_{112\da1}(x;k|i)&=xW_{112\da1}(x;k)-xW_{112\da1}(x;k|i)+W_{112\da1}(x;k|ii),\\
W_{112\da1}(x;k|ii)&=x^2+x^2\sum_{j=1}^{i-1}W_{112\da1}(x;k|j)+xW_{112\da1}(x;k|ii)+\sum_{j=i+1}^kW_{112\da1}(x;k|iij)\\
&=x^2+x^2\sum_{j=1}^{i-1}W_{112\da1}(x;k|j)+xW_{112\da1}(x;k|ii)+x^2\sum_{j=i}^{k-1}W_{112\da1}(x;k-1|j),
\end{align*}
which implies
\begin{align*}
W_{112\da1}(x;k|i)=xW_{112\da1}(x;k)-x^2\sum_{j=i+1}^{k}W_{112\da1}(x;k|j)+x^2\sum_{j=i}^{k-1}W_{112\da1}(x;k-1|j)
\end{align*}
for $1 \leq i \leq k$. Therefore, for $0 \leq i \leq k-1$, we have
\begin{align*}
W_{112\da1}(x;k|k-i)-W_{112\da1}(x;k|k-i+1)=-x^2W_{112\da1}(x;k|k-i+1)+x^2W_{112\da1}(x;k-1|k-i).
\end{align*}
Define the array $a_{i,j}$ for $0\leq i \leq j$ by $$W_{112\da1}(x;k|k-i)=\sum_{j=0}^ia_{i,j}W_{112\da1}(x;k-j).$$
The $a_{i,j}$ are seen to be polynomials in $x$. Comparing coefficients of $W_{112\da1}(x;k-j)$ in the last recurrence yields
$$a_{i,j}=(1-x^2)a_{i-1,j}+x^2a_{i-1,j-1}, \qquad i \geq 1 \text{ and } 0 \leq j \leq i,$$
where $a_{0,0}=x$ and $a_{i,j}=0$ if $j>i$ or $j<0$. Define $A_i(y)=\sum_{j=0}^ia_{i,j}y^j$ so that
$$A_i(y)=(1-x^2+x^2y)A_{i-1}(y), \qquad i \geq 1,$$
with $A_0(y)=x$.  Iteration gives
$$A_i(y)=x(1-x^2+x^2y)^i=\sum_{j=0}^i (1-x^2)^{i-j}\binom{i}{j}x^{2j+1}y^j,$$
which implies $a_{i,j}=(1-x^2)^{i-j}\binom{i}{j}x^{2j+1}$. Noting
$$W_{112\da1}(x;k)=1+\sum_{i=0}^{k-1}W_{112\da1}(x;k|k-i)=1+\sum_{j=0}^{k-1}\sum_{i=j}^{k-1}a_{i,j}W_{112\da1}(x;k-j)$$
yields the desired result.
\end{proof}

\begin{theorem}
For $k \geq 1$,
$$W_{112\da2}(x;k)=\sum_{j=1}^k\frac{x}{(1-x^2)^j-1+x}\prod_{i=j+1}^k\frac{ix^2-1+(1-x^2)^i}{(1-x^2)^i-1+x}.$$
\end{theorem}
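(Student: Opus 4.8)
The plan is to adapt the scanning-elements algorithm of \cite{FM} used in the preceding two theorems, refining $W_{112\da2}(x;k)$ according to the first one or two letters of a word. Writing $A_i=W_{112\da2}(x;k|i)$ and $B_i=W_{112\da2}(x;k|ii)$, I would first record the two functional equations that come from examining the letter following the prefix. Exactly as in the $112\da1$ case, decomposing a word beginning with $i$ according to whether or not its second letter is again $i$ yields
$$A_i=xW_{112\da2}(x;k)-xA_i+B_i.$$
For the second equation I would scan the third letter of a word beginning with $ii$: a third letter equal to $i$ contributes $xB_i$ (a run of two or more equal letters behaves identically for this pattern), a third letter $j<i$ leaves the leading $ii$ inert and contributes $x^2\sum_{j=1}^{i-1}A_j$, and a third letter $j>i$ produces a consecutive occurrence of the subword $112$. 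This gives
$$B_i=x^2+x^2\sum_{j=1}^{i-1}A_j+xB_i+\sum_{j=i+1}^{k}W_{112\da2}(x;k|iij).$$

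The one genuinely new ingredient, and the step I expect to be the main obstacle, is the evaluation of $W_{112\da2}(x;k|iij)$ for $j>i$. Here the consecutive factor $iij$ is an occurrence of $112$ in which the role of the large letter is played by $j$, so avoidance of $112\da2$ forbids every later appearance of $j$; the crucial point, which I would establish by verifying that no occurrence of the pattern can straddle the position of this $j$, is that imposing only this single restriction suffices. Consequently the suffix is an arbitrary $112\da2$-avoiding word over the remaining $k-1$ letters, so $W_{112\da2}(x;k|iij)=x^3W_{112\da2}(x;k-1)$ independently of $j$, and the sum above collapses to $(k-i)x^3W_{112\da2}(x;k-1)$. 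This is precisely where $112\da2$ departs from $112\da1$: there the forbidden letter is the small one, which survives in the reduced alphabet and leaves a refined term, whereas here the forbidden letter disappears entirely, producing the cleaner recurrence.

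With the equations in hand, I would eliminate $B_i$ via the first equation and take successive differences in $i$ to obtain the clean first-order recurrence $A_{i+1}=\bigl(A_i-x^3W_{112\da2}(x;k-1)\bigr)/(1-x^2)$, whose solution is $A_i=xV+(A_1-xV)(1-x^2)^{-(i-1)}$, where $V=W_{112\da2}(x;k-1)$ and $A_1$ is read off from the $i=1$ instances of the two equations. Summing $W_{112\da2}(x;k)=1+\sum_{i=1}^{k}A_i$ and evaluating the resulting geometric sum $\sum(1-x^2)^{-(i-1)}$, which is the only real computation and simplifies neatly after multiplying through by $(1-x^2)^k$, would yield the linear relation
$$\left((1-x^2)^k-1+x\right)W_{112\da2}(x;k)=x+\left(kx^2-1+(1-x^2)^k\right)W_{112\da2}(x;k-1).$$
Finally I would iterate this first-order recurrence in $k$; since its leading coefficient vanishes at $k=1$, the $W_{112\da2}(x;0)$ term drops out and the unfolding produces exactly the stated sum-of-products formula.
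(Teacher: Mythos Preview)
Your proposal is correct and follows essentially the same route as the paper: the same two scanning equations, the same key reduction $W_{112\da2}(x;k|iij)=x^3W_{112\da2}(x;k-1)$, the same first-order recurrence $A_i=\frac{1}{1-x^2}A_{i-1}-\frac{x^3}{1-x^2}W_{112\da2}(x;k-1)$, and then summation and iteration in $k$. One small wording slip: what vanishes at $k=1$ is the coefficient $kx^2-1+(1-x^2)^k$ of $W_{112\da2}(x;k-1)$, not the leading coefficient $(1-x^2)^k-1+x$ of $W_{112\da2}(x;k)$; the conclusion that the $W_{112\da2}(x;0)$ term drops out is nonetheless correct.
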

\begin{proof}
By the definitions,
\begin{align*}
W_{112\da2}(x;k|i)&=xW_{112\da2}(x;k)-xW_{112\da2}(x;k|i)+W_{112\da2}(x;k|ii),\\
W_{112\da2}(x;k|ii)&=x^2+x^2\sum_{j=1}^{i-1}W_{112\da2}(x;k|j)+xW_{112\da2}(x;k|ii)+\sum_{j=i+1}^kW_{112\da2}(x;k|iij)\\
&=x^2+x^2\sum_{j=1}^{i-1}W_{112\da2}(x;k|j)+xW_{112\da2}(x;k|ii)+(k-i)x^3W_{112\da2}(x;k-1),
\end{align*}
for $1 \leq i \leq k$. Thus,
\begin{align*}
W_{112\da2}(x;k|i)&=x^2+x(1-x)W_{112\da2}(x;k)+(k-i)x^3W_{112\da2}(x;k-1)+x^2\sum_{j=1}^iW_{112\da2}(x;k|j),
\end{align*}
which implies
\begin{align*}
W_{112\da2}(x;k|1)&=\frac{x^2}{1-x^2}+\frac{x(1-x)}{1-x^2}W_{112\da2}(x;k)+\frac{(k-1)x^3}{1-x^2}W_{112\da2}(x;k-1),\\
W_{112\da2}(x;k|i)&=\frac{1}{1-x^2}W_{112\da2}(x;k|i-1)-\frac{x^3}{1-x^2}W_{112\da2}(x;k-1), \qquad i>1.
\end{align*}
An induction on $i$ yields
\begin{align*}
W_{112\da2}(x;k|i)&=\frac{x^2}{(1-x^2)^i}+\frac{x(1-x)}{(1-x^2)^i}W_{112\da2}(x;k)+x\left(\frac{kx^2-1}{(1-x^2)^i}+1\right)W_{112\da2}(x;k-1)
\end{align*}
for all $k\geq2$. An induction on $k$ now completes the proof.
\end{proof}

Let $T_i(t)=2tT_{i-1}(t)-T_{i-2}(t)$ if $i\geq 2$ with $T_0(t)=1$ and $T_1(t)=t$ denote the Chebyshev polynomial of the first kind and $U_i(t)$ denote the Chebyshev polynomial of the second kind satisfying the same recurrence but with initial conditions $U_0(t)=1$ and $U_1(t)=2t$ (see, e.g., \cite{R}).  The following result provides a connection between Chebyshev polynomials and the pattern $113\da2$ ($\sim133\da2$).

\begin{theorem}
For $k \geq 0$,
$$W_{113\da2}(x;k)=1+\sum_{j=0}^{k-1}\sum_{i=j}^{k-1}a_{i,j}W_{113\da2}(x;k-j),$$
where $a_{i,j}$ denotes the coefficient of $y^j$ in the polynomial
$$\frac{x(x^2(1-y)+y)^{i/2}}{1+y}\left(2yT_i\left(\frac{1+y}{2\sqrt{x^2(1-y)+y}}\right)+(1-y)U_i\left(\frac{1+y}{2\sqrt{x^2(1-y)+y}}\right)\right).$$
\end{theorem}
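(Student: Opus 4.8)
The plan is to run the scanning-elements algorithm exactly as in the three preceding theorems, classifying the members of $\mathcal{A}_{113\da2}(n,k)$ by their first one or two letters. I will use the combinatorial reformulation behind Theorem \ref{th3}: a word avoids $113\da2$ precisely when, for every consecutive factor $aac$ with $c\ge a+2$, no strictly later letter lies in $\{a+1,\dots,c-1\}$, while factors of the form $aa(a+1)$ impose no restriction. I will also use that $W_{113\da2}(x;\cdot)$ depends only on the size of the alphabet, since order-preserving relabeling preserves occurrences of $113\da2$; this lets me re-encode a word that has been forced to avoid a block $\{i+1,\dots,j-1\}$ of consecutive values as a word over a smaller alphabet. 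The first functional equation is the pattern-free identity $W_{113\da2}(x;k|i)=xW_{113\da2}(x;k)-xW_{113\da2}(x;k|i)+W_{113\da2}(x;k|ii)$, obtained by splitting on whether the second letter equals the first. For the second equation I decompose a word beginning with $ii$ according to its third letter $j$: the cases $j<i$ and $j=i+1$ are harmless and each contribute $x^2W_{113\da2}(x;k|j)$, the case $j=i$ contributes $xW_{113\da2}(x;k|ii)$, and each case $j\ge i+2$ creates a $113$ factor $iij$ after which the letters $\{i+1,\dots,j-1\}$ are forbidden, contributing a term governed by $W_{113\da2}(x;k-m)$ with $m=j-i-1$.

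Next I would eliminate $W_{113\da2}(x;k|ii)$ between the two equations and form the successive differences $W_{113\da2}(x;k|i)-W_{113\da2}(x;k|i+1)$ in order to telescope the long sums, exactly as in the arguments for $W_{112\da1}$ and $W_{112\da2}$. This produces a linear recurrence relating three consecutive conditional generating functions together with lower-alphabet ones. I then introduce the triangular array $a_{i,j}$, for $0\le j\le i$, by $W_{113\da2}(x;k|k-i)=\sum_{j=0}^{i}a_{i,j}W_{113\da2}(x;k-j)$, where each $a_{i,j}$ is a polynomial in $x$; comparing the coefficients of $W_{113\da2}(x;k-j)$ converts the recurrence for the conditional generating functions into one for the $a_{i,j}$. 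Encoding the array by $A_i(y)=\sum_{j=0}^{i}a_{i,j}y^j$, the three-term structure becomes the second-order recurrence
\[
A_i(y)=(1+y)A_{i-1}(y)-\left(x^2(1-y)+y\right)A_{i-2}(y),
\]
with $A_0(y)=A_1(y)=x$. The boundary data reflect that a word beginning with $k$ or with $k-1$ is completely unconstrained (neither letter can serve as the $a$ of a $113$ factor), so that $W_{113\da2}(x;k|k)=W_{113\da2}(x;k|k-1)=xW_{113\da2}(x;k)$, giving $a_{0,0}=a_{1,0}=x$.

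To solve the recurrence I would set $q=x^2(1-y)+y$ and write $A_i=q^{i/2}B_i$, which turns it into the Chebyshev recurrence $B_i=2tB_{i-1}-B_{i-2}$ with $t=\frac{1+y}{2\sqrt{q}}$; hence $B_i$ is a fixed linear combination of $T_i(t)$ and $U_i(t)$, and matching $A_0=A_1=x$ pins the combination down to give
\[
A_i(y)=\frac{x\,(x^2(1-y)+y)^{i/2}}{1+y}\left(2yT_i\!\left(\frac{1+y}{2\sqrt{x^2(1-y)+y}}\right)+(1-y)U_i\!\left(\frac{1+y}{2\sqrt{x^2(1-y)+y}}\right)\right),
\]
so that $a_{i,j}=[y^j]A_i(y)$ is exactly the stated coefficient. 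Finally, summing $W_{113\da2}(x;k)=1+\sum_{i=1}^{k}W_{113\da2}(x;k|i)=1+\sum_{i=0}^{k-1}W_{113\da2}(x;k|k-i)$, substituting the array expansion, and interchanging the order of summation yields $W_{113\da2}(x;k)=1+\sum_{j=0}^{k-1}\sum_{i=j}^{k-1}a_{i,j}W_{113\da2}(x;k-j)$, as claimed.

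The main obstacle will be setting up the $j\ge i+2$ contributions correctly. In particular one must account for occurrences of $113\da2$ that straddle the third prefix letter and the remainder---after $iij$ a factor $jjc$ with $c\ge j+2$ may begin at the third position---and verify that the restricted-alphabet reduction, once combined with the telescoping of the difference equations, produces precisely the three-term recurrence with coefficients $1+y$ and $x^2(1-y)+y$. It is these exact coefficients that force the Chebyshev closed form, and checking the initial data $A_0=A_1=x$ against the recurrence is the decisive consistency test; the appearance of $T_i$ and $U_i$ is then simply the general solution of a second-order linear recurrence whose coefficients do not depend on $i$.
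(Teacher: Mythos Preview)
Your approach is essentially the paper's, and the endgame---the three-term recurrence $A_i(y)=(1+y)A_{i-1}(y)-(x^2(1-y)+y)A_{i-2}(y)$ with $A_0=A_1=x$ and its Chebyshev solution---matches exactly. Your combinatorial justification of the initial conditions (that a first letter $k$ or $k-1$ can never start a dangerous $113$ factor) is in fact cleaner than the paper's purely algebraic check.

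There is, however, one genuine gap in the middle. When the third letter is $j\ge i+2$, the contribution is not ``governed by $W_{113\da2}(x;k-m)$'' but by the \emph{conditional} generating function $x^2W_{113\da2}(x;k-(j-i-1)\mid i+1)$: after forbidding $\{i+1,\dots,j-1\}$ the remaining word still starts with the image of $j$, namely $i+1$, and that prefix condition cannot be dropped. Consequently the single $i$-difference you propose does not telescope the sums the way it does for $112\da1$ and $112\da2$; after one difference you are left with $x^2\sum_{j}W_{113\da2}(x;j\mid i+2)-x^2\sum_{j}W_{113\da2}(x;j\mid i+1)$, sums over the \emph{alphabet} index. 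The paper takes a second difference, now in $k$, to collapse these, yielding a four-term relation among $W(x;k+1\mid k-i)$, $W(x;k+1\mid k-i+1)$, $W(x;k+1\mid k-i+2)$, and $W(x;k\mid k-i)$, $W(x;k\mid k-i+1)$; comparing coefficients in the ansatz $W(x;k\mid k-i)=\sum_j a_{i,j}W(x;k-j)$ then gives exactly $a_{i+1,j}=a_{i,j}+a_{i,j-1}-x^2a_{i-1,j}-(1-x^2)a_{i-1,j-1}$, i.e., your recurrence for $A_i(y)$. Once you insert this extra $k$-difference, your argument goes through verbatim.
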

\begin{proof}
By the definitions,
$$W_{113\da2}(x;k|i)=xW_{113\da2}(x;k)-xW_{113\da2}(x;k|i)+W_{113\da2}(x;k|ii)$$
and
\begin{align*}
W_{113\da2}(x;k|ii)&=x^2+x^2\sum_{j=1}^{i-1}W_{113\da2}(x;k|j)+xW_{113\da2}(x;k|ii)\\
&\quad+x^2W_{113\da2}(x;k|i+1)+x^2\sum_{j=i+1}^{k-1}W_{113\da2}(x;j|i+1),
\end{align*}
for $1\leq i\leq k$.  Thus,
\begin{align*}
&(1-x^2)W_{113\da2}(x;k|i)-x(1-x)W_{113\da2}(x;k)\\
&\qquad=x^2+x^2\sum_{j=1}^{i-1}W_{113\da2}(x;k|j)+x^2W_{113\da2}(x;k|i+1)+x^2\sum_{j=i+1}^{k-1}W_{113\da2}(x;j|i+1),
\end{align*}
which is equivalent to
\begin{align*}
W_{113\da2}(x;k|i)-xW_{113\da2}(x;k)=-x^2\sum_{j=i+2}^{k}W_{113\da2}(x;k|j)+x^2\sum_{j=i+1}^{k-1}W_{113\da2}(x;j|i+1)
\end{align*}
since $W_{113\da2}(x;k)=1+\sum_{\ell=1}^kW_{113\da2}(x;k|\ell)$.  Replacing $i$ with $i+1$ and subtracting gives
\begin{align*}
W_{113\da2}(x;k|i+1)-W_{113\da2}(x;k|i)&=x^2W_{113\da2}(x;k|i+2)+x^2\sum_{j=i+2}^{k-1}W_{113\da2}(x;j|i+2)\\
&\quad-x^2\sum_{j=i+1}^{k-1}W_{113\da2}(x;j|i+1).
\end{align*}
Replacing $k$ with $k+1$ and subtracting in the last equation gives
\begin{align*}
W_{113\da2}(x;k+1|i+1)-W_{113\da2}(x;k|i+1)&-W_{113\da2}(x;k+1|i)+W_{113\da2}(x;k|i)\\
&=x^2W_{113\da2}(x;k+1|i+2)-x^2W_{113\da2}(x;k|i+1),
\end{align*}
which we rewrite as
\begin{align*}
W_{113\da2}(x;k+1|k-i)&=W_{113\da2}(x;k+1|k-i+1)+W_{113\da2}(x;k|k-i)-x^2W_{113\da2}(x;k+1|k-i+2)\\
&\quad-(1-x^2)W_{113\da2}(x;k|k-i+1),
\end{align*}
where $1 \leq i \leq k-1$.
Define the array $a_{i,j}$ for $0 \leq j \leq i$ by
$$W(x;k|k-i)=\sum_{j=0}^i a_{i,j}W(x;k-j).$$
The $a_{i,j}$ are seen to be polynomials in $x$.  Comparing coefficients of $W_{113\da2}(x;k-j)$ in the last recurrence gives
$$a_{i+1,j}=a_{i,j}+a_{i,j-1}-x^2a_{i-1,j}-(1-x^2)a_{i-1,j-1},\quad 1 \leq i \leq k-2 \text{ and } 0 \leq j \leq i+1,$$
where $a_{0,0}=a_{1,0}=x$, $a_{1,1}=0$ and $a_{i,j}=0$ if $j>i$ or $j<0$.  Define $A_i(y)=\sum_{j=0}^ia_{i,j}y^j$ so that
$$A_{i+1}(y)=(1+y)A_i(y)-(x^2(1-y)+y)A_{i-1}(y), \qquad i \geq 1,$$
with $A_0(y)=A_1(y)=x$. By induction, one can show for $i \geq0$ that
$$A_i(y)=\frac{x(x^2(1-y)+y)^{i/2}}{1+y}\left(2yT_i\left(\frac{1+y}{2\sqrt{x^2(1-y)+y}}\right)+(1-y)U_i\left(\frac{1+y}{2\sqrt{x^2(1-y)+y}}\right)\right),$$
which yields the desired formula.
\end{proof}

Similar techniques also apply to the patterns $121\da1$ and $121\da2$, the results of which we state without proof.

\begin{theorem}
For $k\geq1$,
$$W_{121\da1}(x;k)=\sum_{j=1}^{k}\frac{1}{1-\sum_{\ell=0}^{j-1}\frac{x}{1+\ell x^2}}\prod_{i=j+1}^k\frac{\sum_{\ell=0}^{i-1}\frac{\ell x^3}{1+\ell x^2}}{1-\sum_{\ell=0}^{i-1}\frac{x}{1+\ell x^2}}.$$
\end{theorem}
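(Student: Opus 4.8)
The plan is to establish the stated closed form by the same scanning-elements strategy used in the four preceding theorems: introduce refined generating functions recording enough of a prefix scanned from the left to detect an occurrence of the consecutive subword $121$ together with the later repetition forced by the trailing $\da1$, assemble a functional-equation system, collapse it to a first-order recurrence in $k$, and then iterate.

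Concretely, I would fix $k$ and, as a word is read from the left, track as state the last two letters placed, since an occurrence of the subword $121$ is a consecutive triple $aba$ with $b>a$, that is, a letter equal to the letter two positions back while the intervening letter is strictly larger. The crucial feature is that the instant such a triple $aba$ appears, the value $a$ is barred from recurring (this is exactly the force of the $\da1$), so the remaining suffix is a word on the alphabet obtained by deleting $a$. I would therefore write equations for $W_{121\da1}(x;k|i)$ and, refining by the second letter, for $W_{121\da1}(x;k|ij)$, splitting according to whether the running transition is an ascent (so that a return to the two-back value triggers the forbidding step) or not. Appending the next letter then either continues safely, contributing a factor $x$ times one of the refined functions, or completes an $aba$ and hands control to a word on a $(k-1)$-letter alphabet, which is precisely where $W_{121\da1}(x;k-1)$ enters.

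Solving this system and summing over the first letter, I expect everything to collapse to the first-order recurrence
\[
W_{121\da1}(x;k)=\frac{1}{1-c_k}+\frac{d_k}{1-c_k}\,W_{121\da1}(x;k-1),\qquad c_k=\sum_{\ell=0}^{k-1}\frac{x}{1+\ell x^2},\quad d_k=\sum_{\ell=0}^{k-1}\frac{\ell x^3}{1+\ell x^2},
\]
valid for $k\geq1$, where $\ell$ indexes the number of still-available letters lying below the letter currently being scanned, and the rational coefficients $\frac{x}{1+\ell x^2}$ and $\frac{\ell x^3}{1+\ell x^2}$ arise from resumming the contributions of descents to those $\ell$ smaller letters followed by a return (an inclusion--exclusion over such returns accounts for the $1+\ell x^2$ in the denominators rather than $1-\ell x^2$). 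A convenient consistency check is $\frac{x}{1+\ell x^2}+\frac{\ell x^3}{1+\ell x^2}=x$, so that $c_k+d_k=kx$; and for $k=1$ one gets $c_1=x$, $d_1=0$, hence $W_{121\da1}(x;1)=\frac{1}{1-x}$, matching the unary case. With the base value $W_{121\da1}(x;0)=1$, the recurrence integrates to
\[
W_{121\da1}(x;k)=\sum_{j=1}^k\frac{1}{1-c_j}\prod_{i=j+1}^k\frac{d_i}{1-c_i}+\prod_{i=1}^k\frac{d_i}{1-c_i},
\]
and since $d_1=0$ the trailing product vanishes, leaving exactly the asserted formula.

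The main obstacle will be the bookkeeping for the trailing $\da1$. An $aba$ event may occur at any position and silently removes the value $a$ from the alphabet available to the entire remaining suffix, so the scan must keep track of which small letters are still usable and show that the net effect is precisely the clean reduction to the $(k-1)$-letter count with the coefficients above. Obtaining the cancellations that produce the compact denominators $1+\ell x^2$, rather than a state space indexed by the full set of forbidden letters, is the delicate point; I would secure it by checking the derived recurrence directly against the cases $k=1,2$ before committing to the general reduction, exactly as the neighboring theorems do.
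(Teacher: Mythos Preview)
The paper states this theorem without proof, saying only that ``similar techniques'' to the four preceding scanning-elements arguments apply; your proposal is exactly that method and is correct. Your worry about tracking a growing set of forbidden letters is unnecessary: carrying out the prefix equations you describe gives $(1+(k-i)x^2)\,W_{121\da1}(x;k\mid i)=x\,W_{121\da1}(x;k)+(k-i)x^3\,W_{121\da1}(x;k-1)$ directly, and summing over $i$ with $\ell=k-i$ produces your anticipated first-order recurrence in $k$ at once.
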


\begin{theorem}
For $k \geq 0$,
$$W_{121\da2}(x;k)=1+\sum_{j=0}^{k-1}\left(\sum_{i=j}^{k-1}\frac{\binom{i}{j}}{\prod_{\ell=i-j}^i(1+\ell x^2)}\right)j!x^{2j+1}W_{121\da2}(x;k-j).$$
\end{theorem}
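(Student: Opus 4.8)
The plan is to run the scanning-elements algorithm exactly as in the proofs for $112\da1$ and $113\da2$ above, the one new wrinkle being that the subword part $121$ of the pattern begins with a strict ascent, so a leading letter is no longer automatically inert. Write $W=W_{121\da2}$ and let $m$ be the first letter of a word counted by $W(x;k|m)$. The starting observation is that $m$ can belong to an occurrence of $121\da2$ only as a flank of a peak, i.e.\ only when the word begins $m\,c\,m$ with $c>m$; in every other continuation of the prefix $mc$ the leading $m$ may simply be deleted. Conditioning on the second and third letters and using $W(x;k|cm)=xW(x;k|m)$ for the inert descent, I would first establish, for each $c>m$, the identity
$$W(x;k|mc)=x^2W(x;k-1|m)+xW(x;k|c)-x^2W(x;k|m),$$
in which the term $xW(x;k|c)$ gathers all continuations whose third letter differs from $m$.

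The pattern-specific input is the first term: once the peak $m\,c\,m$ is completed, its apex value $c$ (which exceeds $m$) can never occur again without creating a copy of $121\da2$, while positions $1$ and $2$ impose no other restriction on what follows. Deleting $mc$ and the now-forbidden value $c$ therefore leaves an arbitrary $121\da2$-avoiding word over $k-1$ letters that again begins with $m$, giving $W(x;k|mcm)=x^2W(x;k-1|m)$ independently of $c$. Summing the displayed identity over all $c>m$, folding in the inert cases $c\le m$ (where $W(x;k|mc)=xW(x;k|c)$), and using $\sum_{c=1}^{k}W(x;k|c)=W(x;k)-1$ then collapses everything to the clean recurrence
$$\bigl(1+(k-m)x^2\bigr)W(x;k|m)=xW(x;k)+(k-m)x^2\,W(x;k-1|m),\qquad 1\le m\le k,$$
with $W(x;0)=1$; note that $m=k$ recovers the expected $W(x;k|k)=xW(x;k)$.

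To solve it I would set $i=k-m$ and introduce the array $a_{i,j}$ for $0\le j\le i$ by $W(x;k|k-i)=\sum_{j=0}^{i}a_{i,j}W(x;k-j)$, just as before. Substituting this expansion into the recurrence, writing $W(x;k-1|m)=\sum_{j=0}^{i-1}a_{i-1,j}W(x;k-1-j)$, and comparing coefficients of $W(x;k-j)$ yields the two-term relation
$$a_{i,0}=\frac{x}{1+ix^2},\qquad a_{i,j}=\frac{ix^2}{1+ix^2}\,a_{i-1,j-1}\quad(1\le j\le i),\quad a_{0,0}=x.$$
Iterating telescopes the factors into $a_{i,j}=\binom{i}{j}\,j!\,x^{2j+1}\big/\prod_{\ell=i-j}^{i}(1+\ell x^2)$. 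Finally, summing $W(x;k)=1+\sum_{i=0}^{k-1}W(x;k|k-i)$ and interchanging the order of summation in $i$ and $j$ produces the asserted formula.

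I expect the main obstacle to be the case analysis behind the identity for $W(x;k|mc)$. Unlike the patterns $112\da b$, here the first letter is genuinely active, so one must check carefully that it enters an occurrence only through an immediately completed peak, that such a peak forbids exactly the single apex value and nothing more, and that the passage to a $(k-1)$-letter alphabet is clean (in particular that positions $1$ and $2$ create no further peaks with the tail). Once this identity and the recurrence $\bigl(1+(k-m)x^2\bigr)W(x;k|m)=xW(x;k)+(k-m)x^2W(x;k-1|m)$ are in hand, the remaining array manipulation is the same routine computation carried out in the earlier theorems.
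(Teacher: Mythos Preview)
Your proposal is correct and is precisely the argument the paper has in mind: the theorem is stated there without proof, with the remark that ``similar techniques also apply to the patterns $121\da1$ and $121\da2$,'' and your derivation carries out exactly that scanning-elements computation. The key identity $W(x;k|mcm)=x^2W(x;k-1|m)$, the resulting recurrence $\bigl(1+(k-m)x^2\bigr)W(x;k|m)=xW(x;k)+(k-m)x^2W(x;k-1|m)$, and the closed form for $a_{i,j}$ all check out.
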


Given the quantities $x_1,x_2,\ldots,x_n$ and $1 \leq m \leq n$, let $e_m(x_1,x_2,\ldots,x_n)$ denote the $m$-th elementary symmetric function in those quantities defined by
$$e_m(x_1,x_2,\ldots,x_n)=\sum_{1\leq i_1<\cdots<i_m \leq n}x_{i_1}x_{i_2}\cdots x_{i_m}.$$
Furthermore, we take $e_m(x_1,x_2,\ldots,x_n)$ to be one if $m=0$ and zero if $m>n$.  Our next three results make use of symmetric functions to describe recurrences satisfied by the generating functions.

\begin{theorem}
For $k \geq 0$,
$$W_{132\da3}(x;k)=1+\sum_{j=0}^{k-1}\sum_{i=j}^{k-1}a_{i,j}W_{132\da3}(x;k-j),$$
where
\begin{align*}
a_{i,j}&=x\sum_{d=0}^{i}\binom{i}{d}e_{i-j}((d-1)x^2-1,(d-2)x^2-1,\ldots,-1).
\end{align*}
\end{theorem}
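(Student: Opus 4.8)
The plan is to adapt the scanning-elements algorithm to $132\da3$ by refining $W_{132\da3}(x;k)$ according to the prefix of a word; throughout write $W(x;k\mid w)$ for $W_{132\da3}(x;k\mid w)$. Two combinatorial observations drive everything. First, the leading letter of a word can begin a consecutive $132$ only if the next two letters strictly rise and then drop below the peak. Hence a leading letter that cannot start such a pattern lies in no occurrence and may simply be deleted: if a word starts $ij$ with $j\le i$, or starts $ijl$ with $i<j$ and $l\notin(i,j)$, then $W(x;k\mid ij)=xW(x;k\mid j)$ and $W(x;k\mid ijl)=xW(x;k\mid jl)$. Second, if a word starts $ijl$ with $i<l<j$, then $ijl$ is a consecutive $132$ with peak $j$, so avoidance of $132\da3$ forbids the value $j$ at every later position; deleting $i,j$ and standardizing the surviving alphabet $[k]\setminus\{j\}$ gives $W(x;k\mid ijl)=x^2W(x;k-1\mid l)$.

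Summing the third-letter relations recovers $W(x;k\mid ij)$ for $i<j$ in terms of single-letter prefixes (using that $jl$ with $l<j$ is a descent, so $W(x;k\mid jl)=xW(x;k\mid l)$), and substituting into the second-letter decomposition of $W(x;k\mid i)$ and collapsing the double sums via $W(x;k)=1+\sum_{l}W(x;k\mid l)$ should yield
\begin{equation*}
W(x;k\mid i)=xW(x;k)-x^{2}\sum_{l=i+1}^{k-1}(k-l)W(x;k\mid l)+x^{2}\sum_{l=i+1}^{k-1}(k-l)W(x;k-1\mid l).
\end{equation*}
Subtracting the instances at $i$ and $i+1$ telescopes each sum to its $l=i+1$ term, leaving the clean relation
\begin{equation*}
W(x;k\mid i)=\bigl(1-(k-i-1)x^{2}\bigr)W(x;k\mid i+1)+(k-i-1)x^{2}\,W(x;k-1\mid i+1).
\end{equation*}
Defining $a_{i,j}$ by $W(x;k\mid k-i)=\sum_{j=0}^{i}a_{i,j}W(x;k-j)$, with $a_{0,0}=x$ and $a_{1,0}=x,\ a_{1,1}=0$ forced by the boundary evaluations $W(x;k\mid k)=W(x;k\mid k-1)=xW(x;k)$, this relation yields the recurrence
\begin{equation*}
a_{i,j}=\bigl(1-(i-1)x^{2}\bigr)a_{i-1,j}+(i-1)x^{2}\,a_{i-2,j-1}.
\end{equation*}

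Finally I would pass to the generating polynomial $A_{i}(y)=\sum_{j}a_{i,j}y^{j}$, which satisfies the second-order recurrence $A_{i}(y)=(1-(i-1)x^{2})A_{i-1}(y)+(i-1)x^{2}y\,A_{i-2}(y)$ with $A_{0}(y)=A_{1}(y)=x$, and solve it so as to read off $a_{i,j}$ as the stated combination of elementary symmetric functions; the $y^{0}$-coefficient already signals the shape, since $a_{i,0}=x\prod_{t=1}^{i-1}(1-tx^{2})$ is exactly $x\,e_{i}$ of the quantities $tx^{2}-1$, and the general coefficient should come out as an $e_{i-j}$ in such arguments after expanding the products generated by the $(i-1)x^{2}$ weights. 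Assembling $W_{132\da3}(x;k)=1+\sum_{i=0}^{k-1}W(x;k\mid k-i)=1+\sum_{j=0}^{k-1}\sum_{i=j}^{k-1}a_{i,j}W(x;k-j)$ then gives the formula. I expect the main obstacle to be the second reduction — rigorously certifying that once $ijl$ forms a consecutive $132$ the sole surviving constraint is the exclusion of the peak value, with no occurrence straddling the deleted prefix — together with the algebra of guessing and verifying by induction on $i$ the symmetric-function closed form for $a_{i,j}$; a careful recheck of the coefficients against the direct counts for small $k$ (e.g.\ $k\le 3$) is advisable, as the bookkeeping of signs in the symmetric functions is delicate.
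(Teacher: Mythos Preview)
Your approach is essentially identical to the paper's, arriving at the same key relation for $W(x;k\mid i)$ and the same recurrence $a_{i+1,j}=(1-ix^{2})a_{i,j}+ix^{2}a_{i-1,j-1}$ via the same prefix decompositions and differencing. For the step you leave open, the paper passes to the exponential generating function $A(z,y)=\sum_{i\ge 0}A_i(y)\,z^{i}/i!$, which converts the recurrence into the first-order ODE $(1+zx^{2})\,\partial_{z}A=(1+zx^{2}y)A$ with $A(0,y)=x$; the solution $A(z,y)=xe^{zy}(1+zx^{2})^{(1-y)/x^{2}}$ is then expanded as a product of two series and the coefficient of $y^{j}$ extracted to obtain the stated symmetric-function expression for $a_{i,j}$.
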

\begin{proof}
By the definitions,
\begin{align*}
W_{132\da3}(x;k|i)&=x+x\sum_{j=1}^{i+1}W_{132\da3}(x;k|j)+\sum_{j=i+2}^kW_{132\da3}(x;k|ij),\\
W_{132\da3}(x;k|ij)&=x^2+x^2\sum_{\ell=1}^iW_{132\da3}(x;k|\ell)+x^2\sum_{\ell=i+1}^{j-1}W_{132\da3}(x;k-1|\ell)\\
&\quad+x^2W_{132\da3}(x;k|j)+x^2W_{132\da3}(x;k|j+1)+x\sum_{\ell=j+2}^kW_{132\da3}(x;k|j\ell)\\
&=xW_{132\da3}(x;k|j)+x^2\sum_{\ell=i+1}^{j-1}W_{132\da3}(x;k-1|\ell)-x^2\sum_{\ell=i+1}^{j-1}W_{132\da3}(x;k|\ell), \qquad j>i+1,
\end{align*}
which implies
\begin{align*}
W_{132\da3}(x;k|i)&=xW_{132\da3}(x;k)+x^2\sum_{\ell=i+1}^{k-1}(k-\ell)W_{132\da3}(x;k-1|\ell)-x^2\sum_{\ell=i+1}^{k}(k-\ell)W_{132\da3}(x;k|\ell).
\end{align*}
Write $W_{132\da3}(x;k|k-i)=\sum_{j\geq0} a_{i,j}W_{132\da3}(x;k-j)$ for some polynomials $a_{i,j}$ in $x$, where $0 \leq j \leq i$. Replacing $i$ by $i-1$ in the last equation and subtracting gives
\begin{align*}
W_{132\da3}(x;k|k-i)-W_{132\da3}(x;k|k-i-1)&=ix^2W_{132\da3}(x;k|k-i)-ix^2W_{132\da3}(x;k-1|k-i),
\end{align*}
upon replacing $i$ with $k-i$.
Comparing coefficients of $W_{132\da3}(x;k-j)$ in the last recurrence gives
\begin{align*}
a_{i+1,j}&=(1-ix^2)a_{i,j}+ix^2a_{i-1,j-1}, \qquad 1 \leq i \leq k-2 \text{ and } 0 \leq j \leq i+1,
\end{align*}
where $a_{0,0}=a_{1,0}=x$, $a_{1,1}=0$, and $a_{i,j}=0$ if $j>i$ or $j<0$.
Define $A_i(y)=\sum_{j\geq0}a_{i,j}y^j$ so that
$$A_{i+1}(y)=(1-ix^2)A_i(y)+ix^2yA_{i-1}(y), \qquad i \geq 1,$$
with $A_0(y)=A_1(y)=x$.

Now define $A(z,y)=\sum_{i\geq0}A_i(y)\frac{z^i}{i!}$. Then the last recurrence can be expressed as
$$(1+zx^2)\frac{d}{dz}A(z,y)=(1+zx^2y)A(z,y),$$
with $A(0,y)=x$. Hence,
\begin{align*}
A(z,y)&=\frac{xe^{zy}}{(1+zx^2)^{(y-1)/x^2}}\\
&=x\sum_{a\geq0}\frac{z^ay^a}{a!}\sum_{b\geq0}\binom{(y-1)/x^2+b-1}{b}z^bx^{2b},
\end{align*}
which implies
\begin{align*}
A_i(y)&=x\sum_{a=0}^i\binom{i}{a}(y-1+(a-1)x^2)(y-1+(a-2)x^2)\cdots(y-1+0\cdot x^2)y^{i-a}\\
&=x\sum_{a=0}^i\sum_{b=0}^a \binom{i}{a}e_b((a-1)x^2-1,(a-2)x^2-1,\ldots,-1)y^{i-b}.
\end{align*}
Extracting the coefficient of $y^j$ from the last expression gives
$$a_{i,j}=x\sum_{a=0}^i\binom{i}{a}e_{i-j}((a-1)x^2-1,(a-2)x^2-1,\ldots,-1),$$
which completes the proof.
\end{proof}

Similar techniques apply to the patterns $132\da1$ ($\sim132\da2$) and $231\da3$, the results of which we state without proof.

\begin{theorem}
For $k \geq 0$,
$$W_{132\da1}(x;k)=1+\sum_{j=0}^{k-1}\sum_{i=j}^{k-1}a_{i,j}W_{132\da1}(x;k-j),$$
where
\begin{align*}
a_{i,j}=e_j\left(\frac{0}{1-0\cdot x^2},\ldots,\frac{i-1}{1-(i-1)x^2}\right)x^{2j+1}\prod_{s=0}^{i-1}(1-sx^2).
\end{align*}
\end{theorem}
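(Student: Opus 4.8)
The plan is to mirror the scanning-elements derivation already carried out above for $132\da3$, refining by the first one or two letters; throughout I abbreviate $W(\cdots)$ for $W_{132\da1}(\cdots)$. First I would record the prefix recurrence
$$W(x;k|i)=x+x\sum_{j=1}^{i+1}W(x;k|j)+\sum_{j=i+2}^kW(x;k|ij),$$
which has exactly the same shape as in the $132\da3$ case, since whether an initial pair $ij$ can begin a consecutive $132$ (precisely when $j\geq i+2$, so that a third letter strictly between $i$ and $j$ exists) does not depend on the letter following the dash.

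The sole place where the post-dash letter enters is the expansion of $W(x;k|ij)$ for $j\geq i+2$ according to the third letter $\ell$. When $i<\ell<j$ the block $ij\ell$ is a genuine $132$ subword, and for $132\da1$ the pending obligation is that no later letter repeat the \emph{valley} value $i$ (rather than the peak value $j$, as for $132\da3$). Deleting the value $i$ and relabelling therefore sends a continuation letter $\ell>i$ to $\ell-1$, so this range contributes $x^2\sum_{\ell=i+1}^{j-1}W(x;k-1|\ell-1)$ in place of the term $x^2\sum_{\ell=i+1}^{j-1}W(x;k-1|\ell)$ found before; the remaining ranges of $\ell$ behave verbatim as in the $132\da3$ computation. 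After the identical cancellation against $xW(x;k|j)$ one gets
$$W(x;k|ij)=xW(x;k|j)+x^2\sum_{\ell=i+1}^{j-1}W(x;k-1|\ell-1)-x^2\sum_{\ell=i+1}^{j-1}W(x;k|\ell),$$
and then, substituting into the prefix recurrence and using $\sum_{j=i+2}^k\sum_{\ell=i+1}^{j-1}=\sum_{\ell=i+1}^{k-1}(k-\ell)$,
$$W(x;k|i)=xW(x;k)+x^2\sum_{\ell=i+1}^{k-1}(k-\ell)W(x;k-1|\ell-1)-x^2\sum_{\ell=i+1}^{k}(k-\ell)W(x;k|\ell).$$

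Subtracting the $i-1$ instance from the $i$ instance collapses the double sums, and replacing $i$ by $k-i$ gives
$$W(x;k|k-i)-W(x;k|k-i-1)=ix^2W(x;k|k-i)-ix^2W(x;k-1|k-i-1).$$
Writing $W(x;k|k-i)=\sum_{j=0}^i a_{i,j}W(x;k-j)$ and comparing coefficients — noting that the final term expands as $\sum_j a_{i,j}W(x;k-1-j)$ and so contributes $a_{i,j-1}$ (not $a_{i-1,j-1}$) — yields the recurrence $a_{i+1,j}=(1-ix^2)a_{i,j}+ix^2a_{i,j-1}$, with $a_{0,0}=a_{1,0}=x$ and $a_{1,1}=0$. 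Here $132\da1$ is actually easier than $132\da3$: the polynomial $A_i(y)=\sum_j a_{i,j}y^j$ obeys the \emph{first}-order relation $A_{i+1}(y)=(1-ix^2(1-y))A_i(y)$, so no exponential generating function in a second variable is needed and it telescopes at once to $A_i(y)=x\prod_{s=1}^{i-1}(1-sx^2(1-y))$. Extracting the coefficient of $y^j$ by choosing which $j$ of the factors supply the $sx^2y$ term gives $a_{i,j}=x^{2j+1}\prod_{s=1}^{i-1}(1-sx^2)\,e_j\!\left(\frac{1}{1-x^2},\ldots,\frac{i-1}{1-(i-1)x^2}\right)$, which is the stated formula once one observes that adjoining the zero entry $\frac{0}{1-0\cdot x^2}$ to the symmetric function and the factor $(1-0\cdot x^2)$ to the product changes nothing. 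Finally $W(x;k)=1+\sum_{i=0}^{k-1}W(x;k|k-i)$ delivers the theorem.

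The step I expect to require the most care is the valley-deletion bookkeeping in the second paragraph: one must verify that, once $ij\ell$ is a $132$ with valley $i$, suppressing the single value $i$ in the continuation exactly encodes the remaining avoidance constraint — that no forbidden occurrence bridges the retained prefix and the relabelled suffix, and that the index shift $\ell\mapsto\ell-1$ is the only alteration. Getting this shift right (as against the peak-deletion with \emph{no} shift for $132\da3$) is precisely what turns the second-order recurrence into a telescoping first-order one, so it is at once the crux of the argument and the reason for the cleaner closed form.
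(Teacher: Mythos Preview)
Your proof is correct and follows precisely the approach the paper intends: the paper states this result without proof, remarking only that ``similar techniques apply'' to $132\da1$ as to $132\da3$, and your derivation supplies exactly those omitted details. In particular, your identification of the single change---that for $132\da1$ the forbidden repeated value after the consecutive $132$ is the valley $i$ rather than the peak $j$, forcing the relabelling $\ell\mapsto\ell-1$ and hence the term $W(x;k-1|k-i-1)$ in place of $W(x;k-1|k-i)$---is the crux, and your observation that this collapses the coefficient recurrence to the first-order $A_{i+1}(y)=(1-ix^2(1-y))A_i(y)$ (telescoping directly to the stated symmetric-function formula) is a nice simplification over the exponential-generating-function manoeuvre needed for $132\da3$.
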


\begin{theorem}
For $k \geq 0$,
$$W_{231\da3}(x;k)=1+\sum_{j=0}^{k-1}\sum_{i=j}^{k}a_{i,j}W_{231\da3}(x;k-j),$$
where
\begin{align*}
a_{i,m}&=\frac{k-i-1}{k-1}x^{2m+1}e_m\left(\frac{k-1}{1-(k-1)x^2},\ldots,\frac{k-i}{1-(k-i)x^2}\right)\prod_{s=1}^i(1-(k-s)x^2)\\
&\quad+x^{2m+1}\sum_{j=1}^{i}\frac{(k-i-1)e_m\left(\frac{k-j-1}{1-(k-j-1)x^2},\ldots,\frac{k-i}{1-(k-i)x^2}\right)}{(k-j-1)(k-j)}\prod_{s=j+1}^i(1-(k-s)x^2).
\end{align*}
\end{theorem}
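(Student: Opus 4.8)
The plan is to adapt the scanning-elements algorithm exactly as in the proofs of the $132\da3$ and $132\da1$ theorems, tracking the generating functions $W_{231\da3}(x;k|i)$ and $W_{231\da3}(x;k|ij)$ for words with a prescribed one- or two-letter prefix. The starting point is the observation that a consecutive occurrence of $231$ at positions $1,2,3$ of a word beginning with $ij\ell$ requires $\ell<i<j$, and that once such an occurrence is present the peak value $j$ may never recur (else the dashed letter would complete a copy of $231\da3$); hence the suffix lives over the $(k-1)$-letter alphabet $[k]\setminus\{j\}$. When no $231$ can involve position $1$ — that is, whenever the second letter is $\le i$, or the second letter exceeds $i$ but the third letter is $\ge i$ — the first letter plays no role in any occurrence and may be deleted, since removing the initial letter creates no new adjacencies among the remaining positions. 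First I would record the resulting case split
$$W_{231\da3}(x;k|ij)=xW_{231\da3}(x;k|j)\quad(j\le i),$$
$$W_{231\da3}(x;k|ij)=x^2+x\sum_{\ell=i}^{k}W_{231\da3}(x;k|j\ell)+x^2\sum_{\ell=1}^{i-1}W_{231\da3}(x;k-1|\ell)\quad(j>i).$$

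Next I would eliminate the two-letter prefix generating functions. Using $W_{231\da3}(x;k|j)=x+\sum_{\ell}W_{231\da3}(x;k|j\ell)$ together with the relation $W_{231\da3}(x;k|j\ell)=xW_{231\da3}(x;k|\ell)$ for $\ell<j$, the double sum collapses and the $j$-dependence disappears, leaving for each $i$ the clean recurrence
$$W_{231\da3}(x;k|i)=xW_{231\da3}(x;k)-(k-i)x^2\sum_{\ell=1}^{i-1}W_{231\da3}(x;k|\ell)+(k-i)x^2\sum_{\ell=1}^{i-1}W_{231\da3}(x;k-1|\ell).$$
As in the earlier proofs I would then pass to the array $a_{i,j}$ defined by $W_{231\da3}(x;k|k-i)=\sum_j a_{i,j}W_{231\da3}(x;k-j)$, using $W_{231\da3}(x;k|k)=xW_{231\da3}(x;k)$ and the fact that the suffix over $[k]\setminus\{j\}$ re-expresses $W_{231\da3}(x;k-1|\cdot)$ through the \emph{same} array with its first index lowered by one (because reducing the alphabet by one shifts the offset $k-i$ to $(k-1)-(i-1)$). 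Substituting and comparing coefficients of $W_{231\da3}(x;k-j)$ would produce a recurrence for $a_{i,j}$; after telescoping the resulting descending sums (dividing by $i$ before differencing in $i$) I expect it to take the form
$$a_{i+1,j}=\frac{i+1}{1-(i+1)x^2}\left(\frac{a_{i,j}}{i}-x^2a_{i,j-1}-\frac{x\,\delta_{j,0}}{i(i+1)}\right),$$
with $a_{0,0}=x$.

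Finally I would solve this recurrence. Iterating the homogeneous part generates the products $\prod_s\bigl(1-(k-s)x^2\bigr)$ and, through the $-x^2a_{i,j-1}$ shift, the elementary symmetric functions $e_m\bigl(\frac{k-1}{1-(k-1)x^2},\dots\bigr)$ appearing in the statement, while the inhomogeneous term $\frac{x\,\delta_{j,0}}{i(i+1)}$ contributes the correction sum over $j$ carrying the $\frac{1}{(k-j-1)(k-j)}$ denominators and the overall factor $\frac{k-i-1}{k-1}$; I would verify the closed form by induction on $i$ and then assemble $W_{231\da3}(x;k)=1+\sum_{i}W_{231\da3}(x;k|k-i)$ and interchange the order of summation to reach the stated double sum. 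The hard part will be the bookkeeping in the $k\leftrightarrow k-1$ coupling. Unlike the $132\da3$ case, where the weight $(k-\ell)$ sits inside the summation and a single difference in $i$ eliminates the global term $W_{231\da3}(x;k)$, here the coefficient $(k-i)$ is attached to the outer index, so differencing leaves a residual $\frac{1}{k-i}$ and a copy of $W_{231\da3}(x;k)$ behind. It is precisely this non-constant, inhomogeneous structure that forces $a_{i,j}$ to split into a principal term and a sum over $j$, and getting the two pieces — together with the correct index ranges (note the summation runs to $i=k$ rather than $k-1$) — to match the stated formula is where the care is required.
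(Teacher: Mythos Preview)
The paper does not give a proof of this theorem; it only says that ``similar techniques apply'' to $231\da3$ and states the result.  So there is nothing to compare against, and your plan --- the scanning--elements algorithm refined by one-- and two--letter prefixes --- is exactly the intended route.  Your derivation of the key identity
\[
W_{231\da3}(x;k\mid i)=xW_{231\da3}(x;k)-(k-i)x^{2}\sum_{\ell=1}^{i-1}W_{231\da3}(x;k\mid\ell)+(k-i)x^{2}\sum_{\ell=1}^{i-1}W_{231\da3}(x;k-1\mid\ell)
\]
is correct, and your telescoping step (divide by $k-i$, then difference in $i$) is the right manoeuvre.

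There is, however, a real gap in the passage to the array recurrence.  You assert that $W_{231\da3}(x;k-1\mid\cdot)$ is expressed ``through the \emph{same} array with its first index lowered by one,'' and you then write a recurrence for $a_{i,j}$ that contains no reference to $k$.  But the closed form in the statement shows that $a_{i,m}$ depends explicitly on $k$ (through the factors $1-(k-s)x^{2}$ and the arguments $\tfrac{k-s}{1-(k-s)x^{2}}$ of $e_{m}$).  Writing $a_{i,j}^{(k)}$ for this $k$-dependent array, the differenced recurrence actually reads
\[
a_{i+1,j}^{(k)}=\frac{i+1}{1-(i+1)x^{2}}\Bigl(\frac{a_{i,j}^{(k)}}{i}-x^{2}\,a_{i,j-1}^{\,(k-1)}-\frac{x\,\delta_{j,0}}{i(i+1)}\Bigr),
\]
with the middle term coming from $W_{231\da3}(x;k-1\mid (k-1)-i)=\sum_{j}a_{i,j}^{(k-1)}W_{231\da3}(x;k-1-j)$.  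The two arrays $a^{(k)}$ and $a^{(k-1)}$ are genuinely different, so your displayed recurrence (with a bare $a_{i,j-1}$) does not close, and iterating it cannot produce the $k$-dependent products $\prod_{s}(1-(k-s)x^{2})$ you later invoke.  You flag the $k\leftrightarrow k-1$ coupling as ``the hard part,'' and it is: resolving it requires either working with the doubly-indexed family $a_{i,j}^{(k)}$ simultaneously in $i$ and $k$, or first guessing the closed form and verifying it satisfies the coupled recurrence by induction.  This is also what forces the inner sum in the statement to run to $i=k$ rather than $k-1$: the $a_{i,m}$ here are not literally the coefficients in $W(x;k\mid k-i)=\sum_{m}a_{i,m}W(x;k-m)$ for $0\le i\le k-1$ in the way they were for $132\da3$, so the bookkeeping in your final assembly step will also need adjustment.
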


\textbf{Acknowledgement.}  We wish to thank Andrew Baxter for useful discussions and pointing out to us the reference \cite{Ka}.


\begin{thebibliography}{10}

\bibitem{B}
A. M. Baxter, Shape-Wilf-equivalences for vincular patterns, \emph{Adv. in Appl. Math.} {\bf50(5)} (2013) 723--736.

\bibitem{BS}
A. M. Baxter and M. Shattuck, Some Wilf-equivalences for vincular patterns, \emph{arXiv:1309.7111 (e-print)}.

\bibitem{BM}
A. Burstein and T. Mansour, Counting occurrences of some subword patterns, \emph{Discrete Math. Theor. Comput. Sci.} {\bf6} (2003) 1--12.

\bibitem{BM2}
A. Burstein and T. Mansour, Words restricted by $3$-letter generalized multipermutation patterns, {\em Ann. Comb.} {\bf7:1} (2003) 1--14.

\bibitem{C}
A. Claesson, Generalized pattern avoidance, \emph{European J. Combin.} {\bf22(7)} (2001) 961--971.

\bibitem{E}
S. Elizalde, Asymptotic enumeration of permutations avoiding generalized patterns, \emph{Adv. in Appl. Math.} {\bf36(2)} (2006) 138--155.

\bibitem{FM}
G. Firro and T. Mansour, Three-letter-pattern-avoiding permutations and functional equations, \emph{Electron. J. Combin.} {\bf13} (2006) \#R51.

\bibitem{HM}
S. Heubach and T. Mansour, {\em Combinatorics of Compositions and Words}, CRC Press,
Boca Raton, 2009.

\bibitem{HM2}
S. Heubach and T. Mansour, Avoiding patterns of length three in compositions and multiset permutations, \emph{Adv. in Appl. Math.} {\bf36(2)} (2006) 156--174.

\bibitem{Ka}
A. Kasraoui, New Wilf-equivalence results for vincular patterns, \emph{European J. Combin.} {\bf34(2)} (2013) 322--337.

\bibitem{Ki1}
S. Kitaev, {\em Patterns in Permutations and Words}, Springer-Verlag, Heidelberg, 2011.

\bibitem{Ki2}
S. Kitaev, Partially ordered generalized patterns, \emph{Discrete Math.} {\bf298(1-3)} (2005) 212--229.

\bibitem{M}
T. Mansour, {\em Combinatorics of Set Partitions}, CRC Press, Boca Raton, 2012.

\bibitem{NaZ}
B. Nakamura and D. Zeilberger, Using Noonan-Zeilberger functional equations to enumerate (in polynomial time!) generalized Wilf classes, \emph{Adv. in Appli. Math.} {\bf50(3)} (2013) 356--366.

\bibitem{NoZ}
J. Noonan and D. Zeilberger, The enumeration of permutations with a prescribed number of forbidden patterns, \emph{Adv. in Appli. Math.} {\bf17(4)} (1996) 381--407.

\bibitem{R}
T. Rivlin, \emph{Chebyshev Polynomials: From Approximation Theory to Algebra and Number
Theory}, John Wiley, New York, 1990.


\end{thebibliography}
\end{document}